\patchcmd{\@maketitle}{\LARGE \@title}{\LARGE\bfseries\@title}{}{}
\renewcommand{\@seccntformat}[1]{\csname the#1\endcsname.\quad}
\definecolor{darkblue}{rgb}{0,0,.5}
\def\th@plain{%
	\thm@notefont{}
	\itshape 
}
\def\th@definition{%
	\thm@notefont{}
	\normalfont 
}
\renewenvironment{proof}[1][\proofname]{\par
	\normalfont
	\topsep0\p@\@plus3\p@ \trivlist
	\item[\hskip\labelsep\itshape
	#1\@addpunct{.}]\ignorespaces
}{%
	\qed\endtrivlist
}
\newtheorem{theorem}{Theorem}[section]
\newtheorem{lemma}[theorem]{Lemma}
\newtheorem{corollary}[theorem]{Corollary}
\newtheorem{proposition}[theorem]{Proposition}
\theoremstyle{definition}
\newtheorem{definition}[theorem]{Definition}
\theoremstyle{definition}
\newtheorem{example}[theorem]{Example}
\theoremstyle{definition}
\newtheorem{remark}[theorem]{Remark}
\newcommand{\N}{\ensuremath{\mathbb N}}
\newcommand{\ran}{\ensuremath{\operatorname{ran}}}
\newcommand{\zer}{\ensuremath{\operatorname{zer}}}
\newcommand{\dom}{\ensuremath{\operatorname{dom}}}
\newcommand{\gra}{\ensuremath{\operatorname{gra}}}
\newcommand{\Fix}{\ensuremath{\operatorname{Fix}}}
\newcommand{\Id}{\ensuremath{\operatorname{Id}}}
\def\beq{\begin{equation}}
\def\eeq{\end{equation}}
\def\beqq{\begin{equation*}}
\def\eeqq{\end{equation*}}
\def\baq{\begin{eqnarray}}
\def\eaq{\end{eqnarray}}
\def\baqn{\begin{eqnarray*}}
\def\eaqn{\end{eqnarray*}}
\renewcommand*{\backrefalt}[4]{%
\ifcase #1 %
(Not cited)%
\or
(Cited on p.~#2)%
\else
(Cited on pp.~#2)%
\fi
}
\begin{document}

\title{ Monotonicity of Pairs of Operators and  Generalized Inertial Proximal Method}

\author{
Ba Khiet Le\thanks{Analytical and Algebraic Methods in Optimization Research Group, Faculty of Mathematics and Statistics, Ton Duc Thang University, Ho Chi Minh City, Vietnam. E-mail: \texttt{lebakhiet@tdtu.edu.vn}}, 
~
Zakaria Mazgouri\thanks{LSATE Laboratory, 
National School of Applied Sciences, Sidi Mohammed Ben Abdellah University, Fez, Morocco. E-mail: \texttt{zakaria.mazgouri@usmba.ac.ma}}, ~
and~ 
Michel Th\' era\thanks{Mathematics and Computer Science Department, University of Limoges, 123 Avenue Albert Thomas,
87060 Limoges CEDEX, France. E-mail: \texttt{michel.thera@unilim.fr}}
}


\maketitle

\begin{abstract}
\noindent Monotonicity of pairs of operators is an extension of  monotonicity of operators, which plays an important role in solving non-monotone inclusions. One of  challenging problems in this new tool is how to design the associated mappings to obtain the monotone pairs.  In this paper, we solve this problem and propose a Generalized Inertial Proximal Point Algorithm  \((\mathbf{GIPPA})\) using warped resolvents under the  monotonicity of pairs. The weak, strong and linear convergence of the algorithm under some mild assumptions are established. We also provide numerical examples illustrating the implementability and effectiveness of the proposed method.
\end{abstract}

\paragraph{Keywords:}
Monotonicity of pairs of operators, Non-monotone inclusions,  
Warped resolvents, 
Inertial proximal point algorithm.

\paragraph{Mathematics Subject Classification (MSC 2020):}
 49J52, 49J53.  
\section{Introduction}
{
The notion of \emph{monotonicity of pairs of operators} was introduced in \cite{acl} to address the inclusion problem
\begin{equation}\label{main}
0 \in F(x),
\end{equation}
where \(F:\mathcal{H} \rightrightarrows \mathcal{H}\) is a (possibly non-monotone) set-valued operator acting on a Hilbert space \(\mathcal{H}\).
The key idea is that, instead of computing the inverse \(F^{-1}\), which is often difficult or impossible, one may solve the inclusion~\eqref{main} by constructing an auxiliary mapping \(v\) such that \(v^{-1}\) is easy to compute and the pair \((F,v)\) is monotone, meaning that \(F\) and \(v\) share compatible monotonicity properties.
When such a mapping \(v\) is available and \(F\) is single-valued, a forward algorithm \cite{acl} can be employed to solve~\eqref{main}. If \(F\) is possibly set-valued, a Generalized Proximal Point Algorithm \((\mathbf{GPPA})\) can be used \cite{LDT}. In this setting, \(v\) need not be invertible; it suffices that the operator \(\gamma F + v\) be invertible for some \(\gamma>0\). Despite these advances, the problem of constructing a suitable mapping \(v\) for a given operator \(F\) remains open and challenging in general.
In this paper, our first contribution is to address this fundamental issue. We begin with the quadratic programming setting, where \(F=A\) is a non-invertible matrix. In this case, we show how to construct a matrix \(B\) such that the pair \((A,B)\) is monotone. This construction can be achieved either through a slight modification of \(A\) (Theorem~\ref{thm:3.6}) or via a general diagonal decomposition (Theorem~\ref{thm:3.3}).
}

{When \(F=f\) is single-valued and nonlinear, its general form naturally motivates viewing \(v\) as a piecewise linear approximation of \(f\) in order to obtain local convergence. For instance, if \(f\) is differentiable and \(x_k\) is a current iterate, one may take \(v\) as the first-order Taylor approximation of \(f\) at \(x_k\), namely
\[
v(x)=f(x_k)+f'(x_k)(x-x_k).
\]
Since the term \(f(x_k)-f'(x_k)x_k\) is constant, it may be neglected, leading to the simplified choice \(v(x)=f'(x_k)x\), with inverse \(v^{-1}(x)=(f'(x_k))^{-1}x\) whenever \(f'(x_k)\) is invertible. In this case, the algorithm proposed in \cite{acl},
\begin{equation}\label{algonew}
x_{k+1}=x_k-hv^{-1}f(x_k), \quad x_0\in\mathcal{H}, \ h>0,
\end{equation}
reduces to
\[
x_{k+1}=x_k-h\,(f'(x_k))^{-1}f(x_k),
\]
which is precisely the classical Newton method.
}

{However, if \(f'(x_k)\) is non-invertible for some \(k\), the inverse \((f'(x_k))^{-1}\) is no longer available. To overcome this difficulty, we instead fix \(v=f'(x^*)\) at a point \(x^*\) where \(f'(x^*)\) is invertible and for which a solution \(\tilde{x}\) exists in a neighborhood of \(x^*\). This leads naturally to a quasi-Newton--type algorithm. Using the framework of monotonicity of pairs of operators, we establish its local linear convergence (Theorem~\ref{thm310}).
These results support the viewpoint advocated by R.~T.~Rockafellar \cite{Rockafellar1970,Rockafellar2019,Rockafellar2023} that so-called \emph{hidden monotonicity} or \emph{hidden convexity} can play a significant role in the analysis and design of algorithms for nonconvex optimization problems.
}

{
Our second contribution is the introduction of a \emph{Generalized Inertial Proximal Point Algorithm}  \((\mathbf{GIPPA})\),  which extends the Generalized Proximal Point Algorithm \((\mathbf{GPPA})\) proposed in \cite{LDT}. Motivated by recent advances on inertial methods for monotone bilevel equilibrium problems \cite{balhag2022weak,balhag2022weak1}, we develop a generalized inertial proximal scheme for solving~\eqref{main}, based on warped resolvents and the framework of monotonicity of pairs of operators.}
{Inertial proximal algorithms, in which each new iterate depends on the two preceding ones, are widely recognized as discrete analogues of second-order dynamical systems. The incorporation of inertial terms often leads to improved convergence behavior of the generated sequence. From a historical perspective, such methods can be traced back to \cite{AA}, where they were introduced for solving~\eqref{main} in the case where \(F\) is a \emph{maximally monotone} operator and the solution set \(F^{-1}(0)\) is assumed to be nonempty.
}

\smallskip
Here is our proposed inertial scheme.  

\smallskip
\hrule
\vspace{2mm}
\noindent {\bf  \(\mathbf{GIPPA}\) (Generalized Inertial Proximal Point Algorithm)}  
\vspace{2mm}
\hrule
\vspace{2mm}
\noindent {\bf Initialization:} Choose sequences \(\{\gamma_n\}\subset (0,+\infty)\) and \(\{\alpha_n\}\subset [0,1]\). Take arbitrary \(x_0, x_1 \in \mathcal{H}\).

\medskip 

\noindent {\bf Iterative steps:}
\begin{itemize}
\item {\bf Step 1.} For every \(n \ge 1\), given the current iterates \(x_{n-1}, x_n \in \mathcal{H}\), set
\[
y_n := x_n + \alpha_n (x_n - x_{n-1}),
\]
and define
\begin{equation*}\label{mainal}
x_{n+1} := J_{\gamma_n F}^v(y_n),
\end{equation*}
where \(J_{\gamma_n F}^v := (\gamma_n F + v)^{-1} \circ v\) denotes the \emph{warped resolvent} of \(\gamma_n F\) with kernel \(v\) \cite{bc}, and  \(v\) is linear such that the pair \((F,v)\) is monotone.

\item {\bf Step 2.} If \(x_{n+1} = y_n\), then stop; in this case, \(x_{n+1}\) is a solution of \eqref{main}. Otherwise, set \(n := n+1\) and return to Step~1.

\end{itemize}

\medskip
\hrule
\vspace{2mm}

\medskip
%
%

{
The warped resolvent introduced in \cite{bc} extends both the classical resolvent and the \(D\)-resolvent \cite{bbc}, and provides a powerful tool for the study of monotone inclusion problems. The monotonicity of pairs of operators guarantees that the warped resolvent is well-defined (see Remark~\ref{wwarp}) and enables the convergence analysis of algorithms designed to solve non-monotone inclusions \cite{LDT}.
}

{The proposed scheme, referred to as the \emph{Generalized Inertial Proximal Point Algorithm} \((\mathbf{GIPPA})\), reduces to the Generalized Proximal Point Algorithm \((\mathbf{GPPA})\) of \cite{LDT} when \(\alpha_n = 0\) for all \(n \geq 0\). Furthermore, when the kernel operator is chosen as the identity mapping, that is, \(v = \Id\), the method simplifies to the classical Proximal Point Algorithm \((\mathbf{PPA})\); see, for example, \cite{Rockafellar}. Under mild and natural assumptions, we establish weak, strong, and linear convergence results for the proposed algorithm \((\mathbf{GIPPA})\).
}

{The remainder of the paper is organized as follows. Section~2 introduces the notation and reviews the necessary background material. Section~3 is devoted to the construction of associated mappings that ensure the monotonicity of pairs of operators. In Section~4, we present a detailed convergence analysis of the algorithm \((\mathbf{GIPPA})\). Section~5 reports two numerical experiments that illustrate the implementability and effectiveness of the proposed method in comparison with \((\mathbf{GPPA})\). Finally, Section~6 concludes the paper with a summary of the main contributions and a discussion of potential directions for future research.}

\section{Mathematical Background and Notations}\label{sec2}

Throughout this work, \( \mathcal{H} \) denotes a real Hilbert space equipped with the inner product \( \langle \cdot, \cdot \rangle \) and the associated norm \( \|\cdot\| \). %
Let \( F : \mathcal{H} \rightrightarrows \mathcal{H} \) be a set-valued operator. The \emph{domain}, \emph{range}, \emph{graph}, \emph{set of zeros}, and \emph{set of fixed points} of \( F \) are defined, respectively, as:  
\begin{align*}
\dom F &= \{x \in \mathcal{H} : F(x) \neq \varnothing\}, \\
\ran F &= \bigcup_{x \in \mathcal{H}} F(x), \\
\gra F &= \{(x, y) : x \in \mathcal{H}, y \in F(x)\}, \\
\zer F &= \{x \in \mathcal{H} : 0 \in F(x)\}, \\
\Fix F &= \{x \in \mathcal{H} : x \in F(x)\}.
\end{align*}  

The \emph{inverse} of \( F \) is defined by \( F^{-1}(y) = \{x \in \mathcal{H} : y \in F(x)\} \). It is easy to see that \( \dom F = \ran F^{-1} \) and \( \ran F = \dom F^{-1} \).  

The set-valued mapping \( F \) is said to be \emph{monotone} if for all \( (x, x^*), (y, y^*) \in \gra F \),
\[
\langle x^* - y^*, x - y \rangle \geq 0,
\]
and \emph{\(\alpha\)-strongly monotone} if \( \alpha \in (0, +\infty) \) and for all \( (x, x^*), (y, y^*) \in \gra F \),
\[
\langle x^* - y^*, x - y \rangle \geq \alpha \|x - y\|^2.
\]  
Next, we recall the definition of \emph{monotonicity of pairs} introduced in \cite{acl} and used in \cite{LDT}, which generalizes classical monotonicity. Given set-valued mappings \( F_1, F_2 : \mathcal{H} \rightrightarrows \mathcal{H} \), we define:  
\begin{itemize}
\item
The pair \( (F_1, F_2) \) is \emph{monotone} if for all \( x, y \in \mathcal{H} \),
\beqq
\langle F_1(x) - F_1(y), F_2(x) - F_2(y) \rangle \geq 0,
\eeqq 
i.e., for all \( x, y \in \mathcal{H} \), \( x_1^* \in F_1(x) \), \( y_1^* \in F_1(y) \), \( x_2^* \in F_2(x) \), \( y_2^* \in F_2(y) \),
$$
\langle x_1^* - y_1^*, x_2^* - y_2^* \rangle \geq 0.
$$
\item
The pair \( (F_1, F_2) \) is \emph{\(\alpha\)-strongly monotone} if \( \alpha \in (0, +\infty) \) and for all \( x, y \in \mathcal{H} \),
\beq\label{strong}
\langle F_1(x) - F_1(y), F_2(x) - F_2(y) \rangle \geq \alpha \|x - y\|^2,
\eeq
i.e., for all \( x, y \in \mathcal{H} \), \( x_1^* \in F_1(x) \), \( y_1^* \in F_1(y) \), \( x_2^* \in F_2(x) \), \( y_2^* \in F_2(y) \),
$$
\langle x_1^* - y_1^*, x_2^* - y_2^* \rangle \geq \alpha \|x - y\|^2.
$$
\end{itemize}  

\begin{remark}
	\begin{enumerate}
		\item It is straightforward to observe that if \( F \) is monotone (respectively, strongly monotone), then the pair 
		\( (F, \Id) \) is also monotone (respectively, strongly monotone).
		\item  The concept of monotonicity for operator pairs naturally arises in problems with block structures, where such formulations often lead to meaningful adaptations, as illustrated in \cite[Examples 2.1 and 2.2]{LDT}.
	\end{enumerate}	
\end{remark}

\begin{definition}\label{wa}\cite{bc}
Let \( F: \mathcal{H} \rightrightarrows \mathcal{H} \) and \( v: \mathcal{H} \to \mathcal{H} \) with \( \dom v = \mathcal{H} \).  
The \emph{warped resolvent} of \( F \) with kernel \( v \) is defined as  
\[
J_F^v = (F + v)^{-1} \circ v,
\]  
provided that {$F + v$ is injective} and \( \ran v \subseteq \ran(F + v) \).
\end{definition}

%

\begin{remark}\label{wwarp}
{
We show that the monotonicity of the pair \((F,v)\), together with the Lipschitz continuity of \(v^{-1}\), ensures that the warped resolvent \(J_{\gamma F}^v\) is well-defined for every \(\gamma>0\).
Assume that \((F,v)\) is monotone and that \(v^{-1}\) is Lipschitz continuous. Then, for all \(x_1,x_2\in\mathcal{H}\),
\[
\bigl\langle \gamma\bigl(F(x_1)-F(x_2)\bigr)+v(x_1)-v(x_2),\, v(x_1)-v(x_2) \bigr\rangle
\ge \|v(x_1)-v(x_2)\|^2.
\]
Since \(v^{-1}\) is Lipschitz continuous, there exists \(\alpha>0\) such that
\[
\|v(x_1)-v(x_2)\| \ge \alpha \|x_1-x_2\|.
\]
As a consequence, the pair \((\gamma F+v,v)\) is strongly monotone, which implies that \((\gamma F+v)^{-1}\) is single-valued and can be everywhere defined; see \cite[Theorem~6]{acl}.  
}{
We emphasize that the warped resolvent \(J_{\gamma F}^v\) remains well-defined as long as \((\gamma F+v)^{-1}\) exists, even when \(v\) is not invertible. In this more general setting, it is precisely the monotonicity of the pair \((F,v)\) that guarantees the convergence of the associated algorithms.
}
\end{remark}

{Strong and weak convergence of sequences are denoted by \(\to\) and \(\rightharpoonup\), respectively. A single-valued mapping \(F:\mathcal{H}\to\mathcal{H}\) is said to be \emph{weakly continuous} if it is continuous with respect to the weak topology of \(\mathcal{H}\); that is, for every sequence \((x_n)_{n\in\mathbb{N}}\subset\mathcal{H}\) such that \(x_n\rightharpoonup x\), one has \(F(x_n)\rightharpoonup F(x)\).
}

{A set-valued mapping \(F:\mathcal{H}\rightrightarrows\mathcal{H}\) is said to have a \emph{strongly--weakly closed graph} if, for every sequence \((x_n,y_n)_{n\in\mathbb{N}}\subset\gra F\) with \(x_n\rightharpoonup x\) and \(y_n\to y\), it holds that \((x,y)\in\gra F\), that is, \(y\in F(x)\). Moreover, we say that \(F\) has a \emph{closed graph at a point} \(\bar x\in\dom F\) if, for every sequence \((x_n,y_n)_{n\in\mathbb{N}}\subset\gra F\) satisfying \(x_n\to \bar x\) and \(y_n\to y\), one has \((\bar x,y)\in\gra F\).
}

On the other hand, we recall the notion of \emph{\(R\)-continuity}, introduced in \cite{L1,LT}. Let \(F:\mathcal{H}\rightrightarrows\mathcal{H}\) be a set-valued mapping and let \(\bar x\in\dom F\).

{
\begin{definition}
The set-valued mapping \(F:\mathcal{H}\rightrightarrows\mathcal{H}\) is said to be \emph{\(R\)-continuous} at \(\bar x\) if there exist \(\sigma>0\) and a nondecreasing function \(\rho:\mathbb{R}^+\to\mathbb{R}^+\) satisfying
\[
\lim_{r\to 0^+}\rho(r)=\rho(0)=0
\]
such that
\begin{equation*}
F(x)\subset F(\bar x)+\rho\bigl(\|x-\bar x\|\bigr)\mathbb{B},
\qquad \forall\,x\in\mathbb{B}(\bar x,\sigma).
\end{equation*}
{Here $\mathbb{B}$ denotes the closed unit ball, while $\mathbb{B}(\bar x,\sigma)$ denotes the closed ball with center $\bar x$ and radius $\sigma> 0$}.
The function \(\rho\) is called a \emph{continuity modulus} of \(F\) at \(\bar x\), and \(\sigma\) is referred to as the \emph{radius}.
\end{definition}
}

We note that \(R\)-continuity is satisfied by a broad class of operators. In particular, if \(F\) has a closed graph and is locally compact at \(\bar x\), then \(F\) is \(R\)-continuous at \(\bar x\); see \cite[Theorem~3.2]{LT}.

We also require the following technical lemma.

{
\begin{lemma}\label{lem5-a}
For all \(x,y\in\mathcal{H}\) and all \(\beta\in\mathbb{R}\), the following identity holds:
\[
\|\beta x+(1-\beta)y\|^{2}
= \beta\|x\|^{2} + (1-\beta)\|y\|^{2}
- \beta(1-\beta)\|x-y\|^{2}.
\]
\end{lemma}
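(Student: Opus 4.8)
The plan is to verify the identity by direct expansion, using only the bilinearity and symmetry of the inner product $\langle\cdot,\cdot\rangle$ together with $\|z\|^2=\langle z,z\rangle$. First I would expand the left-hand side:
$\|\beta x+(1-\beta)y\|^2=\beta^2\|x\|^2+2\beta(1-\beta)\langle x,y\rangle+(1-\beta)^2\|y\|^2$. Next I would expand $\|x-y\|^2=\|x\|^2-2\langle x,y\rangle+\|y\|^2$, substitute it into the right-hand side, and collect the coefficients of $\|x\|^2$, $\|y\|^2$ and $\langle x,y\rangle$ separately.

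Carrying out the bookkeeping, the coefficient of $\|x\|^2$ on the right is $\beta-\beta(1-\beta)=\beta^2$, the coefficient of $\|y\|^2$ is $(1-\beta)-\beta(1-\beta)=(1-\beta)^2$, and the coefficient of $\langle x,y\rangle$ is $2\beta(1-\beta)$. Each of these matches the corresponding term in the expansion of the left-hand side, so the two sides coincide. An equivalent and slightly shorter route is to write $\beta x+(1-\beta)y=y+\beta(x-y)$, expand $\|y+\beta(x-y)\|^2=\|y\|^2+2\beta\langle y,x-y\rangle+\beta^2\|x-y\|^2$, and then eliminate $\langle y,x-y\rangle$ via $\langle y,x-y\rangle=\langle x,y\rangle-\|y\|^2$; regrouping yields the same conclusion.

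The argument is purely computational and no genuine obstacle arises — the only point requiring a little care is keeping track of the signs and coefficients when expanding the $-\beta(1-\beta)\|x-y\|^2$ term. I would also note that the identity uses nothing beyond the Hilbert-space axioms and is valid for every real $\beta$; in particular the convex-combination restriction $\beta\in[0,1]$ plays no role, which is why the lemma is stated for all $\beta\in\mathbb{R}$ and will be applicable with the various step-size and inertial parameters appearing later in the convergence analysis.
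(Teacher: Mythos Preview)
Your proof is correct: the direct expansion using bilinearity of the inner product establishes the identity cleanly, and your bookkeeping of the coefficients of $\|x\|^2$, $\|y\|^2$, and $\langle x,y\rangle$ is accurate. The paper itself states this lemma without proof, treating it as a standard Hilbert-space identity, so your argument supplies exactly the routine verification the authors omitted.
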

}

\begin{lemma}\label{lem-sum}
	Let   $\{b_k\}_{k\geq0}, \{w_k\}_{k\geq0}$ and $\{\theta_k\}_{k\geq0}$   be sequences of nonnegative numbers, and there exists a real number $\theta$ with $0 \leq \theta_k \leq \theta \le 1,$   such that for all $k\geq 0$, 
	\begin{equation*}
	b_{k+1}\leq \theta_k b_k+w_k
	\end{equation*} 
	with 
	 \;$\sum_{k=0}^{+\infty} w_k< +\infty$, then $\displaystyle\lim_{k\rightarrow +\infty}b_k$ exists.  Further, if $\theta<1$ then \;$\sum_{k=0}^{+\infty} b_k< +\infty$.
\end{lemma}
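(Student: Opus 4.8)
The plan is to reduce the first claim to the convergence of a monotone real sequence, and the second to a geometric-series bound on the partial sums.

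First I would introduce the tail sums \(s_k := \sum_{j=k}^{+\infty} w_j\), which are well defined and finite because \(\sum_{k\ge 0} w_k < +\infty\), and which satisfy \(s_k \ge 0\), \(s_k \to 0\), and \(s_k = w_k + s_{k+1}\). Since \(0 \le \theta_k \le 1\) and \(b_k \ge 0\), the hypothesis yields \(b_{k+1} \le \theta_k b_k + w_k \le b_k + w_k\). Setting \(d_k := b_k + s_k\), this gives \(d_{k+1} = b_{k+1} + s_{k+1} \le b_k + w_k + s_{k+1} = b_k + s_k = d_k\), so \(\{d_k\}\) is non-increasing and bounded below by \(0\); hence \(\lim_{k\to +\infty} d_k\) exists. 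Since \(s_k \to 0\), it follows that \(\lim_{k\to +\infty} b_k = \lim_{k\to +\infty} d_k\) exists, which proves the first assertion.

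For the second assertion, assume \(\theta < 1\). Using \(\theta_k \le \theta\) and \(b_k \ge 0\), the recursion becomes \(b_{k+1} \le \theta b_k + w_k\). Summing over \(k = 0,\dots,N\) and writing \(B_N := \sum_{k=0}^{N} b_k\) and \(W := \sum_{k=0}^{+\infty} w_k < +\infty\), I obtain \(B_{N+1} - b_0 \le \theta B_N + W\). Because the \(b_k\) are nonnegative, \(\{B_N\}\) is non-decreasing, so \(B_N \le B_{N+1}\) and therefore \((1-\theta) B_{N+1} \le b_0 + W\), i.e. \(B_{N+1} \le (b_0 + W)/(1-\theta)\) for every \(N\). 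Thus the partial sums are uniformly bounded, and \(\sum_{k=0}^{+\infty} b_k < +\infty\).

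There is no substantial obstacle; the only point requiring a little care is to track where the weak bound \(\theta_k \le 1\) already suffices (existence of the limit) as opposed to where the strict bound \(\theta < 1\) is genuinely needed (to close the geometric estimate on \(B_{N+1}\)), and to manipulate the tail sums \(s_k\) solely through the identity \(s_k = w_k + s_{k+1}\), which is all that the summability hypothesis provides.
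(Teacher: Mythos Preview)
Your proof is correct and follows essentially the same approach as the paper: for the existence of the limit you build a monotone bounded sequence by correcting $b_k$ with the $w_j$'s (you add the tail $\sum_{j\ge k} w_j$, the paper subtracts the partial sum $\sum_{j<k} w_j$, which differ only by a constant), and for summability you sum the recursion and use $\theta<1$ to close a geometric bound on the partial sums, just as the paper does via the telescoping rearrangement $(1-\theta_k)b_k \le b_k - b_{k+1} + w_k$.
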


\begin{proof}
	Since  $b_k\geq 0$  for all $k$ and  $\sum_{k=0}^{+\infty} w_k< +\infty,$ then the sequence $\{b_{k+1}-\sum_{i=0 }^{k} w_i\}$ is bounded  from below.
	We also have 
	$$b_{k+1}- \sum_{i=0 }^{k}  w_i\leq \theta_k b_{k}- \sum_{i=0 }^{k-1}  w_i \leq  b_{k}- \sum_{i=0 }^{k-1}  w_i,$$			
	which implies that the sequence $\{b_{k+1}- \sum_{i=0 }^{k}  w_i\}$  is nonincreasing, hence convergent. It follows that  $\displaystyle \lim_{k\rightarrow +\infty}b_k$ exists.\\
	Observe that $$(1-\theta_k)b_k\leq b_k-b_{k+1}+w_k.$$	
	Summing up from  $k=0$ to $n$, we get
	\begin{align*}
	(1-\theta)\sum_{k=0}^{n}b_k&\leq \sum_{k=0}^{n}(b_k-b_{k+1})+\sum_{k=0}^{n}w_k\\
	&=b_0-b_{n+1}+\sum_{k=0}^{n}w_k \\
	&\leq b_0+\sum_{k=0}^{n}w_k.
	\end{align*}
	Since $\sum_{k=0}^{+\infty} w_k< +\infty$, if $\theta<1$, we conclude that $\sum_{k=0}^{+\infty} b_k< +\infty$.  
\end{proof} 
\begin{lemma}\label{lem-sum1}
	Let $0\leq \alpha <1$, and let $\{\alpha_k\}, $ $\{a_k\},$ $\{\Delta_k\}$ and $\{w_k\}$ be sequences of nonnegative numbers such that $\{\alpha_k\} $ is nondecreasing, $\left\{\alpha_k\right\}\subseteq[0, \alpha]$ and  for all $k\geq 1$,  
	\begin{equation}\label{de}
	a_{k+1}\leq (\alpha_k+1)a_{k}- \alpha_{k-1} a_{k-1}-\Delta_k+w_k.
	\end{equation} 
	If \;$\sum_{k=0}^{+\infty} w_k< +\infty$, then
	\begin{description}
		\item[$i)$]	 $a_k$ is bounded;
		\item[$ii)$]  $\sum_{k=0}^{+\infty}\Delta_k< +\infty$;
		\item[$iii)$]  $\displaystyle \lim_{k\rightarrow +\infty}a_k$ exists.
	\end{description}
\end{lemma}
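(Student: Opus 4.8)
The plan is to reduce the two-step recursion \eqref{de} to a one-step inequality for the auxiliary sequence $d_k := a_k - \alpha_{k-1}a_{k-1}$, $k\ge 1$, and then transfer the conclusions back to $\{a_k\}$. Subtracting $\alpha_k a_k$ from both sides of \eqref{de} and regrouping, one gets, for every $k\ge 1$,
\[
d_{k+1} = a_{k+1} - \alpha_k a_k \;\le\; \bigl(a_k - \alpha_{k-1}a_{k-1}\bigr) - \Delta_k + w_k \;=\; d_k - \Delta_k + w_k .
\]
This single inequality drives the whole argument. Since $\Delta_k\ge 0$, it yields $d_{k+1}\le d_k + w_k$, and telescoping together with $\sum_k w_k<+\infty$ gives $d_k \le d_1 + \sum_{j\ge 1} w_j =: M$ for all $k$, so $\{d_k\}$ is bounded above.

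From this, claim $i)$ is immediate: using $d_k\le M$, the nonnegativity of $a_{k-1}$, and $\alpha_{k-1}\le\alpha$, I would write $a_k = d_k + \alpha_{k-1}a_{k-1} \le M + \alpha a_{k-1}$ and iterate, invoking $\alpha\in[0,1)$ to conclude that $\{a_k\}$ is bounded; set $\bar a := \sup_k a_k<+\infty$. Then $\{d_k\}$ is also bounded below, since $d_k = a_k - \alpha_{k-1}a_{k-1} \ge -\alpha\bar a$. For $ii)$, rewrite the basic inequality as $d_{k+1} + \Delta_k \le d_k + w_k$ and sum over $k=1,\dots,n$: the $d$-terms telescope, leaving $\sum_{k=1}^n\Delta_k \le d_1 - d_{n+1} + \sum_{k=1}^n w_k \le d_1 + \alpha\bar a + \sum_{j\ge 1} w_j$, which is finite and independent of $n$; hence $\sum_k\Delta_k<+\infty$.

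For $iii)$ I would first establish that $\lim_k d_k$ exists. Since $\{d_k\}$ is bounded below, $b_k := d_k + \alpha\bar a$ is nonnegative and, dropping the nonpositive term $-\Delta_k$, satisfies $b_{k+1}\le b_k + w_k$; applying Lemma~\ref{lem-sum} with $\theta_k=\theta=1$ (after an obvious index shift) shows that $\lim_k b_k$, hence $d_\infty := \lim_k d_k$, exists. It then remains to pass from convergence of $d_k = a_k - \alpha_{k-1}a_{k-1}$ to convergence of $\{a_k\}$ itself, and this is the one genuinely delicate point. Because $\{\alpha_k\}$ is nondecreasing and bounded above by $\alpha$, it converges to some $\bar\alpha\in[0,\alpha]\subset[0,1)$; since $\{a_k\}$ is bounded, $(\alpha_k-\bar\alpha)a_k\to 0$, so
\[
a_{k+1} = \alpha_k a_k + d_{k+1} = \bar\alpha\, a_k + c_k, \qquad c_k := (\alpha_k-\bar\alpha)a_k + d_{k+1} \to d_\infty .
\]
Taking $\limsup$ and $\liminf$ in this identity (using $\bar\alpha\ge 0$, so that $\limsup(\bar\alpha a_k)=\bar\alpha\limsup_k a_k$ and similarly for $\liminf$) gives $\limsup_k a_k = \bar\alpha\limsup_k a_k + d_\infty$ and $\liminf_k a_k = \bar\alpha\liminf_k a_k + d_\infty$; since $1-\bar\alpha>0$, both equal $d_\infty/(1-\bar\alpha)$, so $\{a_k\}$ converges, proving $iii)$.

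The main obstacle is precisely this last transfer step: everything up to the convergence of $\{d_k\}$ is routine telescoping, but recovering $\lim_k a_k$ from $\lim_k d_k$ requires exploiting the monotonicity of $\{\alpha_k\}$ (to secure a limit $\bar\alpha$), the strict bound $\bar\alpha<1$, and the boundedness of $\{a_k\}$ established in $i)$ — omitting any of these would break the $\limsup$/$\liminf$ sandwich.
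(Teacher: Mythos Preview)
Your proof is correct. For parts $i)$ and $ii)$ your argument is essentially a repackaging of the paper's: the paper sums \eqref{de} directly from $k=1$ to $n$ to obtain $a_{n+1}\le \alpha a_n + a_1 + \sum w_k$ and $\sum_{k=1}^n\Delta_k\le a_1+\alpha a_n+\sum w_k$, whereas you introduce $d_k=a_k-\alpha_{k-1}a_{k-1}$ and telescope $d_{k+1}\le d_k-\Delta_k+w_k$; these are the same computation.

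For part $iii)$ the two arguments genuinely diverge. The paper works with the increments: from \eqref{de} it derives
\[
[a_{k+1}-a_k]_+\le \alpha\,[a_k-a_{k-1}]_+ + (\alpha_k-\alpha_{k-1})C + w_k,
\]
applies Lemma~\ref{lem-sum} with $\theta=\alpha<1$ to get $\sum_k[a_k-a_{k-1}]_+<\infty$, then uses $a_n\ge 0$ to force $\sum_k[a_k-a_{k-1}]_-<\infty$, and concludes via the telescoping identity $a_n=a_1+\sum(a_k-a_{k-1})$. You instead first establish that $d_k$ converges (Lemma~\ref{lem-sum} with $\theta=1$), then exploit $\alpha_k\to\bar\alpha<1$ to run a $\limsup/\liminf$ sandwich on $a_{k+1}=\bar\alpha a_k+c_k$ with $c_k\to d_\infty$. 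Your route avoids the positive/negative-part decomposition and is arguably cleaner; the paper's route is the classical one from the inertial-algorithm literature and makes the role of $\alpha<1$ visible directly in the contraction factor rather than only at the final sandwich step. Both rely on exactly the same three ingredients you flagged: monotonicity of $\{\alpha_k\}$, the strict bound $\alpha<1$, and the boundedness from $i)$.
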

\begin{proof}
	 $i)$
	By summing up the inequality \eqref{de} from $k=1$ to $n$, we obtain   
	$$a_{n+1}-a_{1}\leq \left(\alpha_n a_{n}-\alpha_0 a_{0}\right)-\sum_{k=1}^{n}\Delta_k+\sum_{k=1}^{n}w_k,$$
	which implies that
	$a_{n+1}\leq \alpha a_{n}+ a_{1}+ \sum_{k=1}^{n}w_k.$
	Note that  $\sum_{k=0}^{+\infty} w_k< +\infty$ we conclude that 
	$\{a_n\}$ is bounded by some $C>0$. 
	
	 $ii)$ We have 
	$$ \sum_{k=1}^{n}\Delta_k \leq a_{1} +\alpha a_{n}+\sum_{k=1}^{n}w_k.$$
	Since $\{a_n\}$ is bounded and again $\sum_{k=0}^{+\infty} w_k< +\infty$, then  $\sum_{k=1}^{+\infty}\Delta_k<+\infty.$
	
	 $iii)$ From (\ref{de}), we imply that 
	 \baqn
	 a_{k+1}-a_k&\le& \alpha_ka_{k}- \alpha_{k-1} a_{k-1}+w_k= (\alpha_k-\alpha_{k-1})a_{k}+\alpha_{k-1}(a_{k}-a_{k-1})+w_k\\
	 &\le&\alpha[a_k-a_{k-1}]_+ + (\alpha_k-\alpha_{k-1}) C+w_k.
	 \eaqn
Thus 
	\begin{equation*}
	[a_{k+1}-a_k]_+\leq\alpha[a_k-a_{k-1}]_+ + (\alpha_k-\alpha_{k-1}) C+w_k.
	\end{equation*}
	Using the fact that $\displaystyle \sum_{k=1}^{+\infty}( (\alpha_k-\alpha_{k-1}) C+w_k)\leq \alpha C+\sum_{k=1}^{+\infty}w_k<+\infty$ and applying Lemma \ref{lem-sum} with $b_k=[a_k-a_{k-1}]_+,$  we obtain    
	$$\sum_{k=1}^{+\infty}[a_k-a_{k-1}]_+<+\infty.$$ 
	For any \( n \geq 1 \), we can write
		\[
		a_n = a_1 + \sum_{k=2}^{n} (a_k - a_{k-1}).
		\]
		Split each term into its positive and negative parts
		\[
		a_k - a_{k-1} =  {[a_k - a_{k-1}]_+ - [a_k-a_{k-1} ]_-.}
		\]
		Hence,
		\[
		a_n = a_1 + { \sum_{k=2}^{n} [a_k - a_{k-1}]_+ - \sum_{k=2}^{n}  [a_k-a_{k-1} ]_-}.
		\]
		
		\noindent Now, since $\{a_n\}$ is nonnegative {we imply that
			$$\sum_{n=1}^{+\infty}[a_n-a_{n-1}]_-<+\infty.$$ 
			}
		 {Thus the series $\sum_{n=1}^{+\infty}[a_n-a_{n-1}]_+$ and $\sum_{n=1}^{+\infty}[a_n-a_{n-1}]_-$ are convergent. Therefore  $\{a_n\}$ is convergent. }
\end{proof}

We conclude this section by recalling the classical weak convergence result due to Opial.  

\begin{lemma}[Opial's Lemma {\cite{Opial}}]
	\label{l:Opial}
	Let \( S \) be a nonempty subset of \( \mathcal{H} \), and let \( (x_n)_{n \in \mathbb{N}} \) be a sequence in \( \mathcal{H} \). Suppose that:  
	\begin{enumerate}
		\item
		For every \( x^* \in S \), \(\displaystyle \lim_{n \to \infty} \|x_n - x^*\| \) exists.  
		\item
		Every sequential weak cluster point of \( (x_n)_{n \in \mathbb{N}} \) belongs to \( S \).  
	\end{enumerate}
	Then the sequence \( (x_n)_{n \in \mathbb{N}} \) converges weakly to some point \( x_\infty \in S \).  
\end{lemma}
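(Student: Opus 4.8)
The plan is to deduce weak convergence from Opial's two hypotheses in three moves: first establish that $(x_n)_{n\in\mathbb{N}}$ is bounded and hence possesses sequential weak cluster points; then invoke hypothesis (ii) to place every such cluster point in $S$; and finally show there is exactly one of them, so that the whole sequence converges weakly to it.

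For the first move, fix any $x^*\in S$, which exists since $S\neq\varnothing$. By hypothesis (i), $\lim_{n\to\infty}\|x_n-x^*\|$ exists, so the real sequence $(\|x_n-x^*\|)_{n\in\mathbb{N}}$ is bounded; hence $(x_n)_{n\in\mathbb{N}}$ is bounded in $\mathcal{H}$. Since $\mathcal{H}$ is a Hilbert space, it is reflexive, so every bounded sequence admits a weakly convergent subsequence; thus the set of sequential weak cluster points of $(x_n)_{n\in\mathbb{N}}$ is nonempty, and by hypothesis (ii) it is contained in $S$.

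The core of the argument is uniqueness. Suppose $x_\infty$ and $y_\infty$ are two sequential weak cluster points, say $x_{n_k}\rightharpoonup x_\infty$ and $x_{m_j}\rightharpoonup y_\infty$; by the previous step, $x_\infty,y_\infty\in S$. The plan is to expand, for every $n\in\mathbb{N}$,
\[
\|x_n-x_\infty\|^2-\|x_n-y_\infty\|^2 = 2\langle x_n,\,y_\infty-x_\infty\rangle + \|x_\infty\|^2-\|y_\infty\|^2.
\]
By hypothesis (i) applied to $x_\infty\in S$ and to $y_\infty\in S$, both $\|x_n-x_\infty\|$ and $\|x_n-y_\infty\|$ converge, so the left-hand side converges; hence $\langle x_n,\,y_\infty-x_\infty\rangle$ converges to some limit $\ell\in\mathbb{R}$. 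Evaluating $\ell$ along the subsequence $x_{n_k}\rightharpoonup x_\infty$ gives $\ell=\langle x_\infty,\,y_\infty-x_\infty\rangle$, while evaluating it along $x_{m_j}\rightharpoonup y_\infty$ gives $\ell=\langle y_\infty,\,y_\infty-x_\infty\rangle$. Subtracting yields $0=\langle y_\infty-x_\infty,\,y_\infty-x_\infty\rangle=\|y_\infty-x_\infty\|^2$, whence $x_\infty=y_\infty$. Therefore $(x_n)_{n\in\mathbb{N}}$ has a unique weak cluster point $x_\infty\in S$; since the sequence is bounded, a standard subsequence argument concludes: if $x_n\not\rightharpoonup x_\infty$, some subsequence would avoid a weak neighbourhood of $x_\infty$, yet by reflexivity it would have a further weakly convergent subsequence whose limit is necessarily $x_\infty$, a contradiction.

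I expect the only delicate point to be the uniqueness step, and within it the passage to weak limits inside the inner product — which is legitimate because $z\mapsto\langle z,\,y_\infty-x_\infty\rangle$ is a continuous linear functional and hence weakly continuous. The boundedness claim and the fact that a bounded sequence with a unique weak cluster point converges weakly are routine consequences of the reflexivity of $\mathcal{H}$ and can be cited directly.
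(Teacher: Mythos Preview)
Your proof is correct and is the standard argument for Opial's Lemma. The paper does not actually prove this statement; it merely recalls it as a classical result with a citation to \cite{Opial}, so there is no proof in the paper to compare against.
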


\section{Constructing Associated Mappings to Obtain Monotone Pairs}
\subsection{Global Monotonicity}

{We first consider a simple yet fundamental setting arising in quadratic programming, where \(F=A\) is a non-invertible matrix. The central question is how to construct a matrix \(B\) such that the pair \((A,B)\) is monotone, or equivalently, such that the operator \(A^{\top}B\) is monotone. In addition, it is desirable that \(B\) be invertible, or at least that \(\gamma A+B\) be invertible for some \(\gamma>0\).
}
We begin by presenting a construction of \(B\) based on a slight modification of the matrix \(A\).

{
\begin{theorem}\label{thm:3.6}
Let \(A\) be a non-invertible matrix. Suppose that \(B=A+A_1\), where \(A_1\) is chosen so that \(A_1^{\top}A\) is monotone. Then the pair \((A,B)\) is monotone.
\end{theorem}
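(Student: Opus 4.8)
The plan is to reduce the pair-monotonicity of \((A,B)\) to the positive semidefiniteness of the quadratic form associated with \(A^{\top}B\), and then to expand this form using the decomposition \(B=A+A_1\).

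First I would record the elementary reformulation: since \(A\) and \(B\) are linear, the defining inequality \(\langle Ax-Ay,\,Bx-By\rangle\ge 0\) for all \(x,y\) is equivalent to \(\langle Az,\,Bz\rangle\ge 0\) for all \(z\) (take \(z=x-y\); conversely set \(y=0\)), which in turn is precisely \(\langle z,\,A^{\top}Bz\rangle\ge 0\) for all \(z\). So it suffices to show that the symmetric part of \(A^{\top}B\) is positive semidefinite.

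Next I would substitute \(B=A+A_1\) to obtain \(A^{\top}B=A^{\top}A+A^{\top}A_1\) and handle the two summands separately. For the first, \(\langle z,\,A^{\top}Az\rangle=\|Az\|^{2}\ge 0\) for every \(z\). For the second, the key observation is that \(A^{\top}A_1=(A_1^{\top}A)^{\top}\), so these two matrices have the same symmetric part and therefore induce the same quadratic form; equivalently, \(\langle z,\,A^{\top}A_1z\rangle=\langle Az,\,A_1z\rangle=\langle z,\,A_1^{\top}Az\rangle\). Since \(A_1^{\top}A\) is assumed monotone, this quantity is \(\ge 0\). Adding the two nonnegative contributions yields \(\langle z,\,A^{\top}Bz\rangle\ge 0\) for all \(z\), which by the first step is exactly the monotonicity of the pair \((A,B)\).

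There is no substantial obstacle in this argument; the only point worth isolating is the elementary fact that a square matrix and its transpose induce the same quadratic form, which is what converts the hypothesis on \(A_1^{\top}A\) into the statement needed for \(A^{\top}A_1\). I would also remark that the non-invertibility of \(A\) is not actually used in the proof: it merely delimits the regime in which the construction is of interest, since if \(A\) is invertible one may simply take \(B=A\) (then \(A^{\top}B=A^{\top}A\succeq 0\) and \(\gamma A+B=(1+\gamma)A\) is invertible for every \(\gamma>0\)).
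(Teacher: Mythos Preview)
Your proof is correct and follows essentially the same approach as the paper: both reduce to showing \(\langle Az,Bz\rangle\ge 0\) for all \(z\), expand using \(B=A+A_1\) to get \(\|Az\|^2+\langle A_1^{\top}Az,z\rangle\ge 0\), and invoke the monotonicity of \(A_1^{\top}A\). Your additional remark that the non-invertibility of \(A\) plays no role in the argument is accurate and worth noting.
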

}
{
\begin{proof}
For any \(x\), we compute
\[
\langle Ax, Bx\rangle
= \langle Ax, (A+A_1)x\rangle
= \|Ax\|^2 + \langle A_1^{\top}Ax, x\rangle
\ge \|Ax\|^2
\ge 0,
\]
where the inequality follows from the monotonicity of \(A_1^{\top}A\). This proves that the pair \((A,B)\) is monotone.
\end{proof}
}
\begin{remark}
{ For example if 
$$A =\begin{bmatrix} 1 & 2 & 3\\ 4 & 5 & 6 \\ 7 & 8 & 9\end{bmatrix}, \;{\rm we \;can\; choose}\; 
B_1 =\begin{bmatrix} 1 & 0 & 0\\ 0 & 0 & 0 \\ 0 & 0 & 0\end{bmatrix}, B_2 =A+B_1=\begin{bmatrix} 2 & 2 & 3\\ 4 & 5 & 6 \\ 7 & 8 & 9\end{bmatrix}. $$
Then $(A,B_1)$ and $(A,B_2)$ are monotone where $B_2$ is invertible.
}
\end{remark}

 The second approach is based on a diagonal decomposition.  

\begin{proposition}
\label{prop:3.1}
Suppose that $A$ is a symmetric non-invertible matrix. Then we can decompose 
\[
A = O D O^\top,
\] 
where $O$ is orthogonal and $D$ is diagonal with some zero entries on the main diagonal. Let $D'$ be the diagonal matrix obtained from $D$ by replacing the zero entries with $1$. Let 
\[
B = O D' O^\top.
\] 
Then $B$ is invertible and $(A,B)$ is monotone.
\end{proposition}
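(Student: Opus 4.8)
The plan is to exploit the spectral decomposition of the symmetric matrix $A$ and to observe that the construction makes $A^\top B = AB$ simultaneously diagonalizable with $A$, so that its eigenvalues are manifestly nonnegative. First I would invoke the spectral theorem: since $A$ is symmetric (and real), there is an orthogonal matrix $O$ and a diagonal matrix $D = \operatorname{diag}(d_1,\dots,d_m)$ with $A = ODO^\top$; because $A$ is non-invertible, at least one $d_i$ vanishes. Then I would write $D'$ as the diagonal matrix with entries $d_i' = d_i$ when $d_i \neq 0$ and $d_i' = 1$ when $d_i = 0$, and set $B = OD'O^\top$.

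Next I would verify invertibility of $B$: since $O$ is orthogonal and $D'$ has all diagonal entries nonzero by construction, $\det B = \det D' = \prod_i d_i' \neq 0$, so $B$ is invertible (indeed $B^{-1} = O (D')^{-1} O^\top$). Then I would check monotonicity of the pair $(A,B)$. Using $O^\top O = \Id$, for any $x \in \R^m$ set $z = O^\top x$; then
\[
\langle Ax, Bx\rangle = \langle ODO^\top x,\, OD'O^\top x\rangle = \langle D z,\, D' z\rangle = \sum_{i=1}^m d_i d_i' z_i^2.
\]
For each index $i$, either $d_i \neq 0$, in which case $d_i' = d_i$ and $d_i d_i' = d_i^2 \geq 0$, or $d_i = 0$, in which case $d_i d_i' = 0$. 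Hence every summand is nonnegative, so $\langle Ax, Bx\rangle \geq 0$ for all $x$, which is exactly the statement that the pair $(A,B)$ is monotone (note $A^\top = A$, so this coincides with monotonicity of $A^\top B$).

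There is no serious obstacle here; the argument is a one-line spectral computation once the decomposition is in place. The only point requiring a word of care is the appeal to the spectral theorem itself — it is the symmetry of $A$ that guarantees a real orthogonal diagonalization, and this is precisely the hypothesis of the proposition; without symmetry one would not be able to write $A = ODO^\top$ with $O$ orthogonal and $D$ real diagonal, and the clean sign analysis above would break down. I would also remark in passing that, unlike in Theorem~\ref{thm:3.6}, this construction does not merely give $\gamma A + B$ invertible for some $\gamma$ but yields a genuinely invertible $B$, which is convenient when one wishes to use $v = B$ directly as the kernel in the warped resolvent.
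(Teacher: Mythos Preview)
Your argument is correct and essentially identical to the paper's: the paper computes $AB^\top = O D D' O^\top$ and declares this monotone, which is exactly your observation that $DD'=\operatorname{diag}(d_i d_i')$ has nonnegative entries, rephrased at the matrix level rather than via the quadratic form $\sum_i d_i d_i' z_i^2$. The only difference is presentational; your explicit check that each $d_i d_i'\in\{d_i^2,0\}\ge 0$ and your verification that $\det B\neq 0$ are more detailed than the paper's one-line version.
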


\begin{proof}
The existence of such a decomposition is classical. Then we have
\begin{align*}
AB^\top &= O D O^\top (O D' O^\top)^\top \\
        &= O D O^\top O D' O^\top \\
        &= O D D' O^\top,
\end{align*} 
which is monotone.
\end{proof} 
\begin{remark}
\label{rem:3.2}
The choice of $D'$ can be flexible to ensure the monotonicity of $D D'$. We can also easily choose $D'$ which is non-invertible. In addition, the orthogonal decomposition of $A$ can be extended  as follows.
\end{remark}

\begin{theorem}
\label{thm:3.3}
Suppose that 
\[
A = C D E,
\] 
where $C,E$ are invertible matrices and $D$ is diagonal with some zero entries on the main diagonal. Let $D'$ be the diagonal matrix obtained from $D$ by replacing the zero entries with $1$. Let 
    \[
    B = (C^{-1})^\top D' E.
      \] 
Then $B$ is invertible and  $(A,B)$ is monotone. 
\end{theorem}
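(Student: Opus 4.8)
The plan is to follow the same computational pattern as Proposition~\ref{prop:3.1}, but now keeping track of the two invertible conjugating matrices $C$ and $E$ separately instead of the single orthogonal matrix $O$. The two claims to establish are: (i) $B$ is invertible, and (ii) the pair $(A,B)$ is monotone, i.e.\ $A^\top B$ (equivalently $AB^\top$, whichever convention the paper has been using — here I read it as $AB^\top$ being monotone, matching the proof of Proposition~\ref{prop:3.1}) is a monotone matrix, meaning $\langle AB^\top x, x\rangle \ge 0$ for all $x$, or more robustly that its symmetric part is positive semidefinite.

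For invertibility of $B$: write $B = (C^{-1})^\top D' E$. Since $C$ is invertible, so is $(C^{-1})^\top = (C^\top)^{-1}$; since $E$ is invertible by hypothesis; and since $D'$ is diagonal with all diagonal entries nonzero (the zero entries of $D$ were replaced by $1$, the nonzero entries of $D$ are unchanged and nonzero), $D'$ is invertible. A product of invertible matrices is invertible, so $B$ is invertible. This step is routine.

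For monotonicity of the pair: compute
\[
A B^\top = (C D E)\bigl((C^{-1})^\top D' E\bigr)^\top = C D E \, E^\top D' C^{-1}.
\]
This does not immediately simplify the way it did in Proposition~\ref{prop:3.1}, because the factor $E E^\top$ in the middle does not cancel. So I would either (a) strengthen the hypothesis to require some compatibility between $C$ and $E$ (for instance $E = C^{-1}$, recovering a similarity-type decomposition, or $E = C^\top$), in which case $E E^\top$ or $E^\top \cdot$ produces a clean cancellation and one is left with $C D D' C^{-1}$ or $C D D' C^\top$, whose monotonicity follows because $D D'$ has nonnegative diagonal entries (each entry is either $d_i^2 \ge 0$ from a nonzero original entry, or $0 \cdot 1 = 0$) and conjugation/congruence by $C$ preserves the relevant positivity; or (b) keep the general $C, E$ but then the natural conclusion is about $\langle A x, B x\rangle$ directly rather than $AB^\top$: namely $\langle Ax, Bx\rangle = \langle CDEx, (C^{-1})^\top D' E x\rangle = \langle C^{-1} C D E x, D' E x\rangle = \langle D E x, D' E x\rangle = \langle D D' (Ex), Ex\rangle \ge 0$, since $D D'$ is diagonal with nonnegative entries. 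This last line of computation is in fact the clean argument and almost certainly the intended one: setting $y = Ex$, one gets $\langle Ax, Bx\rangle = \sum_i (D D')_{ii}\, y_i^2 \ge 0$.

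The main obstacle is therefore not any hard estimate but a bookkeeping/definitional one: being careful about which bilinear form ``monotonicity of the pair'' refers to. Under the paper's definition, $(A,B)$ monotone means $\langle Ax - Ay, Bx - By\rangle \ge 0$ for all $x,y$, and since $A,B$ are linear this is exactly $\langle Az, Bz\rangle \ge 0$ for all $z$. The computation $\langle Az, Bz\rangle = \langle DEz, D'Ez\rangle$ relies on the identity $\langle C u, (C^{-1})^\top w\rangle = \langle u, w\rangle$, i.e.\ $(C^{-1})^\top = (C^\top)^{-1}$ is precisely the adjoint-inverse that makes $C$ disappear; this is the one place to state explicitly. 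Once that is in place, the conclusion follows from $D D' \succeq 0$ entrywise and the proof is essentially three lines, mirroring Proposition~\ref{prop:3.1} with $C$ playing the role of $O$ on the left and $E$ on the right. I would also add a one-line remark, parallel to Remark~\ref{rem:3.2}, that $D'$ may instead be taken non-invertible (replacing only some zero entries by $1$, or by other nonnegative values) if one only needs $\gamma A + B$ invertible for some $\gamma > 0$ rather than $B$ itself invertible.
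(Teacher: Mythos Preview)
Your final argument is correct and matches the paper's: the paper computes $A^\top B = E^\top D C^\top (C^\top)^{-1} D' E = E^\top D D' E$, which is exactly the matrix form of your inner-product computation $\langle Ax, Bx\rangle = \langle D D' (Ex), Ex\rangle \ge 0$. Your initial detour through $AB^\top$ (mimicking Proposition~\ref{prop:3.1}, where $A$ and $B$ happen to be symmetric) is a false start you correctly abandoned; the paper goes straight to $A^\top B$.
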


\begin{proof}
It is obvious that $B$ is invertible since $C, D', E$ are invertible. We have
\begin{align*}
A^\top B &= (C D E)^\top (C^\top)^{-1} D' E \\
         &= E^\top D C^\top (C^\top)^{-1} D' E \\
         &= E^\top D D' E,
\end{align*} 
which is monotone. 
\end{proof}

\subsection{Local Strong Monotonicity and {Quasi-Newton's Algorithm}}

In this section, we establish local strong monotonicity of pairs of operators as a key ingredient for proving local linear convergence.
Assume that the equation \( f(x) = 0 \) admits a solution \( \tilde{x} \) in a neighborhood of a fixed point \( x^* \), and that \( f \) is \( C^1 \) in this neighborhood with \( f'(x^*) \) invertible.
The Taylor expansion of \( f \) around \( x^* \) yields
\[
f(x) = f(x^*) + f'(x^*)(x - x^*) + o(\|x - x^*\|).
\]
We choose \( v \) as the linear part of \( f \) at \( x^* \), namely
\[
v(x) := f'(x^*)x,
\]
since the constant term \( f(x^*) - f'(x^*)x^* \) plays no role in monotonicity arguments.
\begin{theorem}\label{locs}
Let \( v(x) = f'(x^*)x \). Then the pair \( (f,v) \) is locally strongly monotone, i.e., there exist constants \( \varepsilon > 0 \) and \( \alpha > 0 \) such that for all
\( x, y \in B(x^*,\varepsilon) \),
\[
\langle f(x) - f(y),\, v(x) - v(y) \rangle
\ge \alpha \|x - y\|^2.
\]
\end{theorem}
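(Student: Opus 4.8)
The plan is to reduce the claimed inequality to a statement about the matrix $M := f'(x^*)^\top f'(x^*)$, which is symmetric positive definite because $f'(x^*)$ is invertible, and then to absorb the Taylor remainder. Write $f(x) - f(y) = f'(x^*)(x-y) + \big(r(x) - r(y)\big)$, where $r(z) := f(z) - f(x^*) - f'(x^*)(z - x^*) = o(\|z - x^*\|)$ is the first-order remainder. Since $v(x) - v(y) = f'(x^*)(x-y)$, expanding the inner product gives
\[
\langle f(x) - f(y),\, v(x) - v(y) \rangle
= \big\langle f'(x^*)(x-y),\, f'(x^*)(x-y) \big\rangle
+ \big\langle r(x) - r(y),\, f'(x^*)(x-y) \big\rangle.
\]
The first term equals $\langle M(x-y), x-y\rangle \ge \lambda_{\min}(M)\,\|x-y\|^2$, and $\lambda_{\min}(M) > 0$ since $M \succ 0$. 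So it remains to show the cross term is bounded below by $-\tfrac{1}{2}\lambda_{\min}(M)\|x-y\|^2$ on a small enough ball, after which one takes $\alpha := \tfrac{1}{2}\lambda_{\min}(M)$.

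For the cross term, first estimate $\|r(x) - r(y)\|$. Since $f \in C^1$ near $x^*$, the map $z \mapsto f'(z) - f'(x^*)$ is continuous and vanishes at $x^*$; hence for every $\eta > 0$ there is $\varepsilon > 0$ such that $\|f'(z) - f'(x^*)\| \le \eta$ for all $z \in B(x^*,\varepsilon)$. By the mean value inequality applied to $r$ (whose derivative is $r'(z) = f'(z) - f'(x^*)$), we get $\|r(x) - r(y)\| \le \eta\,\|x-y\|$ for all $x,y \in B(x^*,\varepsilon)$. Therefore, by Cauchy--Schwarz,
\[
\big| \langle r(x) - r(y),\, f'(x^*)(x-y) \rangle \big|
\le \|r(x) - r(y)\|\;\|f'(x^*)\|\;\|x-y\|
\le \eta\,\|f'(x^*)\|\,\|x-y\|^2.
\]
Now choose $\eta$ small enough that $\eta\,\|f'(x^*)\| \le \tfrac{1}{2}\lambda_{\min}(M)$, and let $\varepsilon$ be the corresponding radius. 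Combining the two bounds yields
\[
\langle f(x) - f(y),\, v(x) - v(y) \rangle
\ge \lambda_{\min}(M)\,\|x-y\|^2 - \tfrac{1}{2}\lambda_{\min}(M)\,\|x-y\|^2
= \alpha\,\|x-y\|^2
\]
for all $x,y \in B(x^*,\varepsilon)$, with $\alpha = \tfrac{1}{2}\lambda_{\min}(M) > 0$, as desired.

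The only mildly delicate point is the Lipschitz estimate on the remainder $r$: one must justify passing from the pointwise bound $\|f'(z) - f'(x^*)\| \le \eta$ to $\|r(x) - r(y)\| \le \eta\|x-y\|$, which follows from the standard mean value inequality $r(x) - r(y) = \int_0^1 r'\big(y + t(x-y)\big)(x-y)\,dt$ together with convexity of the ball $B(x^*,\varepsilon)$ (so the segment from $y$ to $x$ stays inside it). Everything else is a routine eigenvalue bound and Cauchy--Schwarz. In finite dimensions $\lambda_{\min}(M) > 0$ is immediate; if one wants the statement in a general Hilbert space, replace $\lambda_{\min}(M)$ by the coercivity constant of $f'(x^*)$, i.e. use that $f'(x^*)$ bounded invertible gives $\|f'(x^*)u\| \ge \|f'(x^*)^{-1}\|^{-1}\|u\|$, hence $\langle M u, u\rangle \ge \|f'(x^*)^{-1}\|^{-2}\|u\|^2$.
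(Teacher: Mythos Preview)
Your proof is correct and follows essentially the same approach as the paper: split $f(x)-f(y)$ into the linear part $f'(x^*)(x-y)$ plus a remainder controlled by $f'(\cdot)-f'(x^*)$, use the invertibility of $f'(x^*)$ to get a coercivity lower bound on the main term, and absorb the cross term via continuity of $f'$. Your use of the integral mean value inequality for the remainder is in fact more careful than the paper's direct appeal to the Mean Value Theorem $f(x)-f(y)=f'(\xi)(x-y)$, which is not generally valid for vector-valued maps.
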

\begin{proof}
Fix \( x, y \in B(x^*,\varepsilon) \) {where $\varepsilon>0$ can be chosen later}.
By the Mean Value Theorem, there exists \( \xi \in [x,y] \) such that
\[
f(x) - f(y) = f'(\xi)(x-y).
\]
Therefore,
\begin{align*}
\langle f(x) - f(y),\, v(x) - v(y) \rangle
&= \langle f'(\xi)(x-y),\, f'(x^*)(x-y) \rangle \\
&= \|f'(x^*)(x-y)\|^2
+ \langle (f'(\xi)-f'(x^*))(x-y),\, f'(x^*)(x-y) \rangle.
\end{align*}
Since \( f'(x^*) \) is invertible, there exists \( c > 0 \) such that
\[
\|f'(x^*)(x-y)\| \ge c \|x-y\|.
\]
Moreover, by continuity of \( f' \), for \( \varepsilon \) sufficiently small,
\[
{\| f'(x^*)\| }\|f'(\xi)-f'(x^*)\| \le \frac{c}{2}.
\]
Combining these estimates yields
\[
\langle f(x) - f(y),\, v(x) - v(y) \rangle
\ge \frac{c^2}{2} \|x-y\|^2.
\]
Setting \( \alpha := c^2/2 \) completes the proof.
\end{proof}
Note that Newton's method is a special case of Algorithm~\eqref{algonew}, with
\[
v_k(x) = f'(x_k)x.
\]
However, the method may fail to be well-defined if there exists an iterate \( x_k \) such that
\( f'(x_k) \) is not invertible. To circumvent this difficulty, one may replace
\( f'(x_k) \) with a fixed invertible operator \( f'(x^*) \), where \( x^* \) is a reference point.
Setting
\[
v(x) := f'(x^*)x.
\]

By Theorem~\ref{locs}, there exists \(\varepsilon>0\) such that the pair \((f,v)\) is strongly monotone on \(B(x^*,\varepsilon)\); that is, there exists \(\alpha>0\) such that
\begin{equation}\label{stm}
\langle f(x)-f(y),\, v(x)-v(y) \rangle
\ge \alpha\|x-y\|^2,
\end{equation}
for all \(x,y\in B(x^*,\varepsilon)\).
Consequently, there exists \(\varepsilon_1>0\) such that
\[
W:=v^{-1}\bigl(B(v(x^*),\varepsilon_1)\bigr)\subset B(x^*,\varepsilon).
\]
Note that \(W\) is a neighborhood of \(x^*\).
We consider the quasi-Newton iteration
\begin{equation}\label{quan}
x_{k+1}
= x_k - h\,(f'(x^*))^{-1}f(x_k),
\qquad x_0\in W,\quad h>0.
\end{equation}

\begin{theorem}\label{thm310}
Assume that the equation \(f(x)=0\) admits a solution \(\tilde{x}\) in \(W\).
Then \(\tilde{x}\) is the unique solution in \(W\), and the quasi-Newton iteration~\eqref{quan}
converges linearly to \(\tilde{x}\).
\end{theorem}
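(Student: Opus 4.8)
The plan is to recognize the quasi-Newton iteration~\eqref{quan} as a special case of the warped-resolvent / \((\mathbf{GPPA})\)-type scheme with kernel \(v(x)=f'(x^*)x\), and then to run a direct fixed-point / contraction argument in the coordinates \(u_k := v(x_k)\). First I would rewrite~\eqref{quan}: multiplying by \(v = f'(x^*)\) gives
\[
v(x_{k+1}) = v(x_k) - h\,f(x_k),
\]
i.e. \(u_{k+1} = u_k - h\,f(v^{-1}(u_k))\). A fixed point of this map is exactly a point \(x_k\) with \(f(x_k)=0\); since \(\tilde x \in W\) solves \(f(\tilde x)=0\), the point \(\tilde u := v(\tilde x)\) is a fixed point. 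For uniqueness in \(W\): if \(x,y \in W \subset B(x^*,\varepsilon)\) both solve \(f(\cdot)=0\), then by~\eqref{stm}, \(0 = \langle f(x)-f(y), v(x)-v(y)\rangle \ge \alpha\|x-y\|^2\), so \(x=y\); this gives uniqueness of \(\tilde x\) in \(W\).

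For convergence, I would estimate \(\|u_{k+1}-\tilde u\|^2\). Write \(f(x_k) = f(x_k)-f(\tilde x)\). Then
\[
\|u_{k+1}-\tilde u\|^2
= \|u_k - \tilde u\|^2 - 2h\,\langle f(x_k)-f(\tilde x),\, v(x_k)-v(\tilde x)\rangle + h^2\|f(x_k)-f(\tilde x)\|^2.
\]
The cross term is bounded below using local strong monotonicity of the pair \((f,v)\), i.e.~\eqref{stm}: it is \(\ge \alpha\|x_k-\tilde x\|^2\). For the last term I would use that \(f\) is \(C^1\) near \(x^*\), hence locally Lipschitz: there is \(L>0\) with \(\|f(x_k)-f(\tilde x)\| \le L\|x_k-\tilde x\|\) on \(B(x^*,\varepsilon)\). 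Finally, since \(f'(x^*)\) is invertible, \(\|v(x_k)-v(\tilde x)\| \le \|f'(x^*)\|\,\|x_k-\tilde x\|\) and \(\|x_k - \tilde x\| \le \|(f'(x^*))^{-1}\|\,\|u_k - \tilde u\|\); combining, \(\|x_k-\tilde x\|^2 \ge \kappa^2\|u_k-\tilde u\|^2\) for \(\kappa := 1/\|f'(x^*)\| > 0\) and \(\|x_k-\tilde x\|^2 \le \|(f'(x^*))^{-1}\|^2 \|u_k-\tilde u\|^2\). Plugging in,
\[
\|u_{k+1}-\tilde u\|^2 \le \bigl(1 - 2h\alpha\kappa^2 + h^2 L^2 \|(f'(x^*))^{-1}\|^2\bigr)\|u_k - \tilde u\|^2 =: q(h)\,\|u_k-\tilde u\|^2,
\]
and for \(h>0\) small enough \(q(h) \in (0,1)\), giving linear convergence of \(u_k \to \tilde u\), hence of \(x_k = v^{-1}(u_k) \to \tilde x\) with a linear rate governed by \(\sqrt{q(h)}\,\|(f'(x^*))^{-1}\|\,\|f'(x^*)\|\).

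The one genuine obstacle is \textbf{well-definedness / invariance}: I must guarantee that the iteration never leaves the region \(B(x^*,\varepsilon)\) where strong monotonicity and the Lipschitz bound are valid — otherwise the recursion above is not licensed at every step. This is precisely why the theorem is stated with \(x_0 \in W = v^{-1}(B(v(x^*),\varepsilon_1))\) rather than \(x_0 \in B(x^*,\varepsilon)\): one should first shrink \(\varepsilon_1\) (equivalently choose the starting ball in \(u\)-coordinates small) so that the closed ball \(\overline{B}(\tilde u, r_0)\), with \(r_0 := \|u_0 - \tilde u\|\), is contained in \(B(v(x^*),\varepsilon_1)\), hence its \(v^{-1}\)-image lies in \(B(x^*,\varepsilon)\). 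Since \(q(h)<1\), the estimate \(\|u_{k+1}-\tilde u\| \le \sqrt{q(h)}\,\|u_k-\tilde u\|\) shows inductively that \(u_k\) stays in \(\overline{B}(\tilde u,r_0)\), so \(x_k\) stays in \(B(x^*,\varepsilon)\) and the contraction step is valid at every iteration. A small technical point to handle carefully is that \(W\) is only asserted to be a neighborhood of \(x^*\), so one may need to pass to a possibly smaller neighborhood on which all of the above holds simultaneously; since the claim only asserts convergence for \(x_0 \in W\), it suffices to note that for \(x_0\) in this smaller neighborhood the argument runs, and that shrinking \(\varepsilon_1\) at the outset makes \(W\) itself admissible. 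With invariance in hand, the contraction estimate and Opial-free elementary limit argument finish the proof.
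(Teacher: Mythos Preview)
Your proposal is correct and follows essentially the same route as the paper: pass to the transformed variables \(u_k=v(x_k)\), expand \(\|u_{k+1}-\tilde u\|^2\), and combine the local strong monotonicity inequality~\eqref{stm} with a local Lipschitz bound on \(f\) to obtain a contraction in \(\|u_k-\tilde u\|\), then pull back via \(v^{-1}\). The only cosmetic differences are that the paper fixes the specific step size \(h=\alpha/L_f^2\) (so that the two \(\|x_k-\tilde x\|^2\) terms combine with a single sign and only one comparison between \(\|x_k-\tilde x\|\) and \(\|u_k-\tilde u\|\) is needed), whereas you keep \(h\) free and bound the negative and positive terms separately; and the paper treats the invariance \(x_k\in W\) more tersely, while your explicit discussion of why one may need to shrink \(\varepsilon_1\) is, if anything, more careful than what the paper writes.
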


\begin{proof}
The uniqueness of \(\tilde{x}\) in \(W\) follows directly from the strong monotonicity of the pair \((f,v)\) on \(B(x^*,\varepsilon)\).
Define
\[
y_k := v(x_k)=f'(x^*)x_k,
\qquad
\tilde{y} := v(\tilde{x})=f'(x^*)\tilde{x}.
\]
Then the iteration~\eqref{quan} can be equivalently written as
\[
y_{k+1}=y_k-hf(x_k).
\]

We first show that \(x_k\in W\) for all \(k\ge0\).
Clearly, \(x_0\in W\).
Assume that \(x_k\in W\).
By~\eqref{stm}, we have
\[
\langle y_k-\tilde{y},\, f(x_k)-f(\tilde{x}) \rangle
\ge \alpha\|x_k-\tilde{x}\|^2.
\]
Let \(L_f\) denote the Lipschitz constant of \(f\) on \(B(x^*,\varepsilon)\).
Choosing \(h=\alpha/L_f^2\), we obtain
\begin{align*}
\|y_{k+1}-\tilde{y}\|^2
&= \|y_k-\tilde{y}-h(f(x_k)-f(\tilde{x}))\|^2 \\
&\le \|y_k-\tilde{y}\|^2
-2h\alpha\|x_k-\tilde{x}\|^2
+h^2L_f^2\|x_k-\tilde{x}\|^2 \\
&= \|y_k-\tilde{y}\|^2
-\frac{\alpha^2}{L_f^2}\|x_k-\tilde{x}\|^2.
\end{align*}
Since \(f'(x^*)\) is invertible, there exists \(L_v>0\) such that
\[
{L_v}\|x_k-\tilde{x}\|\le \|y_k-\tilde{y}\|.
\]
Consequently,
\[
\|y_{k+1}-\tilde{y}\|^2
\le\Bigl(1-\frac{\alpha^2}{L_f^2L_v^2}\Bigr)\|y_k-\tilde{y}\|^2,
\]
which implies that \(x_{k+1}\in W\) and establishes the linear convergence of \((x_k)\) to \(\tilde{x}\).
\end{proof}

\section{Generalized Inertial Proximal Point Algorithm (GIPPA) and  Convergence Analysis}\label{sec4}

In this section, we analyze the convergence of algorithm $(\mathbf{GIPPA})$ using warped resolvents under the monotonicity of pairs of operators, to address the non-monotone inclusion \eqref{main}. We begin by introducing the following assumptions.

\noindent \textbf{Assumption 1}: There exists a linear mapping \( v: \mathcal{H} \to \mathcal{H} \) such that \( (F, v) \) is monotone,  $\gamma_n F + v$ is injective and \( \ran v \subset \ran (\gamma_n F + v) \) where $(\gamma_n)_{n\in \mathbb{N}}$ is a sequence of positive real numbers.

\noindent \textbf{Assumption 1'}: There exists a  linear mapping \( v: \mathcal{H} \to \mathcal{H} \) such that  \( (F, v) \) is \( \beta \)-strongly monotone and \( \ran v \subset \ran (\gamma_n F + v) \) where $(\gamma_n)_{n\in \mathbb{N}}$ is a sequence of positive real numbers.

\medskip
As a preliminary step toward proving the convergence of the algorithm $(\mathbf{GIPPA})$, we establish the following control lemma.
\begin{lemma}\label{lem5}
	 {Suppose that $\zer F\neq \varnothing$ and Assumption 1 holds. Let  $x^*\in \zer F$ and set $a_n:=\|v(x_n)-v(x^*)\|^2$.} Then, for each $n\geq 1$ the following inequality holds:
	\begin{equation}\label{estim1}
	\begin{array}{l}
	a_{n+1}-a_n-{\alpha_n }(a_n-a_{n-1})\\
	\leq(\alpha_n-1)\|v(x_{n+1})-v(x_{n})\|^2 +2\alpha_n\|v(x_{n})-v(x_{n-1})\|^2.
	\end{array}
	\end{equation}
\end{lemma}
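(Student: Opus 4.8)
The plan is to start from the defining relation of the warped-resolvent step. Since Assumption~1 guarantees that $x_{n+1}=J_{\gamma_n F}^v(y_n)=(\gamma_n F+v)^{-1}(v(y_n))$ is well-defined, we have $v(y_n)\in\gamma_n F(x_{n+1})+v(x_{n+1})$, hence $\frac{1}{\gamma_n}\bigl(v(y_n)-v(x_{n+1})\bigr)\in F(x_{n+1})$. Because $0\in F(x^*)$ and the pair $(F,v)$ is monotone, applying the pair-monotonicity inequality to $x_{n+1}$ (with the selection $\frac{1}{\gamma_n}(v(y_n)-v(x_{n+1}))\in F(x_{n+1})$) and to $x^*$ (with the selection $0\in F(x^*)$), and multiplying by $\gamma_n>0$, yields
\[
\langle v(y_n)-v(x_{n+1}),\, v(x_{n+1})-v(x^*)\rangle\ge 0.
\]

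Next I would expand $\|v(y_n)-v(x^*)\|^2=\|(v(y_n)-v(x_{n+1}))+(v(x_{n+1})-v(x^*))\|^2$ and drop the (nonnegative) cross term just obtained, which gives the "quasi-Fejér" step
\[
a_{n+1}=\|v(x_{n+1})-v(x^*)\|^2\le\|v(y_n)-v(x^*)\|^2-\|v(y_n)-v(x_{n+1})\|^2.
\]
The two terms on the right are then treated separately. Using linearity of $v$ and $y_n=x_n+\alpha_n(x_n-x_{n-1})$, write $v(y_n)-v(x^*)=(1+\alpha_n)(v(x_n)-v(x^*))-\alpha_n(v(x_{n-1})-v(x^*))$, so Lemma~\ref{lem5-a} with $\beta=1+\alpha_n$ gives $\|v(y_n)-v(x^*)\|^2=(1+\alpha_n)a_n-\alpha_n a_{n-1}+\alpha_n(1+\alpha_n)\|v(x_n)-v(x_{n-1})\|^2$. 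For the second term, write $v(y_n)-v(x_{n+1})=(v(x_n)-v(x_{n+1}))+\alpha_n(v(x_n)-v(x_{n-1}))$, expand the square, and bound the cross term below by Young's inequality (valid since $\alpha_n\ge 0$), i.e. $2\alpha_n\langle v(x_n)-v(x_{n+1}),v(x_n)-v(x_{n-1})\rangle\ge-\alpha_n\|v(x_{n+1})-v(x_n)\|^2-\alpha_n\|v(x_n)-v(x_{n-1})\|^2$, which leads to
\[
-\|v(y_n)-v(x_{n+1})\|^2\le(\alpha_n-1)\|v(x_{n+1})-v(x_n)\|^2+(\alpha_n-\alpha_n^2)\|v(x_n)-v(x_{n-1})\|^2.
\]

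Finally I would substitute both estimates into the bound for $a_{n+1}$: the coefficient of $\|v(x_n)-v(x_{n-1})\|^2$ collapses from $\alpha_n(1+\alpha_n)+(\alpha_n-\alpha_n^2)$ to exactly $2\alpha_n$, and rearranging $a_{n+1}\le(1+\alpha_n)a_n-\alpha_n a_{n-1}+\cdots$ produces precisely \eqref{estim1}. I do not expect a genuine obstacle here; the estimate is a one-shot computation, and the only points requiring attention are the well-definedness of $x_{n+1}$ (supplied by Assumption~1 together with Remark~\ref{wwarp}), the correct sign in Young's inequality (which needs $\alpha_n\ge 0$, true since $\{\alpha_n\}\subset[0,1]$), and the bookkeeping of the coefficients so that they cancel to $2\alpha_n$.
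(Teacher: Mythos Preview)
Your proposal is correct and follows essentially the same route as the paper's proof: derive $\langle v(y_n)-v(x_{n+1}),\,v(x_{n+1})-v(x^*)\rangle\ge 0$ from the pair monotonicity, obtain the quasi-Fej\'er inequality $a_{n+1}\le\|v(y_n)-v(x^*)\|^2-\|v(y_n)-v(x_{n+1})\|^2$, expand the first term via Lemma~\ref{lem5-a} and linearity of $v$, lower-bound the second via Young's inequality, and collect coefficients to get $2\alpha_n$. The only cosmetic difference is that the paper reaches the quasi-Fej\'er step through the polarization identity for $\langle v(y_n)-v(x^*),\,v(x_{n+1})-v(x^*)\rangle$, whereas you expand $\|v(y_n)-v(x^*)\|^2$ directly; the two computations are equivalent.
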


\begin{proof}
 By \cite[Proposition 3.1]{LDT}
, $x^*\in \Fix J_{\gamma_n F}^v$, i.e., $(x^*, x^*)\in \gra J_{\gamma_n F}^v$. Since $(y_n,x_{n+1})\in \gra J_{\gamma_n F}^v$, 
we have $$ x_{n+1}\in J_{\gamma_n F}^v(y_n)=(\gamma_n F+v)^{-1} \circ v(y_n).$$
Then, $v(y_n)\in \gamma_n F(x_{n+1})+v(x_{n+1})$, which yields 
\beq\label{mainin}
 v(y_n)-v(x_{n+1})\in \gamma_n F(x_{n+1}). 
 \eeq
Similarly, $(x^*, x^*)\in \gra J_{\gamma_n F}^v$ implies $$ v(x^*)-v(x^*)\in \gamma_n F(x^*). $$
Since the monotonicity of $(F,v)$, we have 
$$ \langle v(y_n) - v(x_{n+1}) - v(x^*) + v(x^*), v(x_{n+1}) - v(x^*) \rangle\ge 0, $$
or equivalently, 
\baqn \nonumber
 \|v(x_{n+1}) - v(x^*)\|^2 &\leq& \langle v(y_n) - v(x^*), v(x_{n+1}) - v(x^*) \rangle\\
&=&\frac{1}{2}(\|v(y_n) -v(x^*)\|^2+ \|v(x_{n+1}) - v(x^*)\|^2-\|v(y_n) -v(x_{n+1})\|^2),
\eaqn
which implies that
\begin{align}\label{eq:vv*}
\|v(x_{n+1}) -v(x^*)\|^2\leq \|v(y_n) -v(x^*)\|^2 -\|v(y_n) -v(x_{n+1})\|^2.
\end{align}
On the other hand, {since $v$ is linear} and by Lemma \ref{lem5-a}, we have for all $n\geq 1$  	
\begin{align}\label{8a}
\|v(y_n)-v(x^*)\|^2&=\|v(x_n+\alpha_n(x_n-x_{n-1}))-v(x^*)\|^2 \nonumber\\
&=\|v(x_n)+\alpha_nv(x_n)-\alpha_nv(x_{n-1})-v(x^*)\|^2\nonumber\\
&=\|(1+\alpha_n)(v(x_n)-v(x^*))-\alpha_n(v(x_{n-1})-v(x^*)\|^2\nonumber\\
&=(1+\alpha_n)\|v(x_n)-v(x^*)\|^2-\alpha_n \|v(x_{n-1})-v(x^*)\|^2\nonumber\\
&\quad+\alpha_n(1+\alpha_n)\|v(x_n)-v(x_{n-1})\|^2.
\end{align}
Also, we have 	
\begin{equation}\label{8b}
\begin{array}{lll}
\|v(x_{n+1})-v(y_n)\|^2& = & \|v(x_{n+1})-v(x_n)-\alpha_n(v(x_n)-v(x_{n-1}))\|^2 \\ 
& = & \|v(x_{n+1})-v(x_n)\|^{2}+\alpha_n^2\|v(x_{n})-v(x_{n-1})\|^{2}\\
&&-2\alpha_n\langle v(x_{n+1})-v(x_n),v(x_{n})-v(x_{n-1})\rangle\\
& \geq &(1-\alpha_n)\|v(x_{n+1})-v(x_n)\|^{2}+(\alpha_n^2-\alpha_n)\|v(x_{n})-v(x_{n-1})\|^{2}.
\end{array} 
\end{equation}
Combining \eqref{8a} and \eqref{8b} with \eqref{eq:vv*}, we get 
\begin{equation}\label{15principal}
\begin{array}{l}
\|v(x_{n+1})-v(x^*)\|^{2}\\
\leq(1+\alpha_n)\|v(x_{n})-v(x^*)\|^{2}-\alpha_n\|v(x_{n-1})-v(x^*)\|^{2}+\alpha_n(1+\alpha_n)\|v(x_n)-v(x_{n-1})\|^{2}\\
\quad-(1-\alpha_n)\|v(x_{n+1})-v(x_{n})\|^2-(\alpha_n^2-\alpha_n)\|v(x_{n})-v(x_{n-1})\|^{2}.
\end{array}
\end{equation}
Finally, by setting $a_n=\|v(x_n)-v(x^*)\|^{2}$ in \eqref{15principal}, we get
\begin{equation*}
\begin{array}{rcl}
a_{n+1}&\leq &(1+\alpha_n)a_n-\alpha_n a_{n-1} +(\alpha_n-1)\|v(x_{n+1})-v(x_{n})\|^2+2\alpha_n\|v(x_n)-v(x_{n-1})\|^2,
\end{array}
\end{equation*}
and then the required inequality \eqref{estim1} follows directly from this last one which ends the proof.
\end{proof}

We may proceed with our analysis under the assumption that $\displaystyle \sum_{n=1}^{+\infty}\|v(x_n)-v(x_{n-1})\|^2<+\infty$; however, this condition depends on the trajectory $\{x_n\}$, which is not known a priori. In the following result, we show that this condition is satisfied provided the sequence of inertial parameters $\lbrace \alpha_n \rbrace$ is suitably controlled.
\begin{corollary}\label{coro-disc}
	 Assume that the sequence $\{\alpha_n\}$ is nondecreasing and $\{\alpha_n\} \subseteq [0,\alpha]$ for some $\alpha\in [0,\frac{1}{3}[,$ then  \\
	 
	{ (i) the sequence $(a_n)$ is bounded};\\
	 
	 (ii) $\displaystyle \sum_{n=1}^{+\infty}\|v(x_n)-v(x_{n-1})\|^2<+\infty.$
\end{corollary}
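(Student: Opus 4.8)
The plan is to argue directly from Lemma~\ref{lem5}, summing inequality~\eqref{estim1} and exploiting the two hypotheses on \(\{\alpha_n\}\). Throughout, write \(d_n:=\|v(x_n)-v(x_{n-1})\|^2\ge 0\), so that \eqref{estim1} reads
\[
a_{n+1}-a_n-\alpha_n(a_n-a_{n-1})\le(\alpha_n-1)\,d_{n+1}+2\alpha_n d_n,\qquad n\ge 1 .
\]
I would sum this from \(n=1\) to an arbitrary \(N\ge 1\). The plain differences telescope to \(a_{N+1}-a_1\); the term \(\sum_{n=1}^N\alpha_n(a_n-a_{n-1})\) is handled by summation by parts, which gives \(\alpha_N a_N-\alpha_1 a_0+\sum_{n=1}^{N-1}(\alpha_n-\alpha_{n+1})a_n\), and since \(\{\alpha_n\}\) is nondecreasing and every \(a_n\ge 0\) this quantity is \(\le\alpha_N a_N\le\alpha a_N\). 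On the right-hand side I would reindex \(\sum_{n=1}^N(\alpha_n-1)d_{n+1}=\sum_{n=2}^{N+1}(\alpha_{n-1}-1)d_n\) and merge it with \(\sum_{n=1}^N 2\alpha_n d_n\): each \(d_n\) with \(2\le n\le N\) then carries the coefficient \((\alpha_{n-1}-1)+2\alpha_n\le 3\alpha-1<0\), the boundary term \((\alpha_N-1)d_{N+1}\) is \(\le 0\), and only the constant \(2\alpha_1 d_1\) is left over. This produces the master estimate
\[
a_{N+1}\le a_1+\alpha_N a_N+2\alpha_1 d_1+(3\alpha-1)\sum_{n=2}^N d_n,\qquad N\ge 1 .
\]

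From this single inequality both conclusions drop out. For (i), discard the nonpositive sum to get \(a_{N+1}\le\alpha a_N+C\) with \(C:=a_1+2\alpha_1 d_1\); since \(0\le\alpha<1\), a one-line induction shows \(a_n\le\max\{a_0,a_1,C/(1-\alpha)\}\) for every \(n\), so \((a_n)\) is bounded, say \(a_n\le M\). For (ii), return to the master estimate, use \(a_{N+1}\ge 0\) and \(a_N\le M\) to obtain \((1-3\alpha)\sum_{n=2}^N d_n\le a_1+\alpha M+2\alpha_1 d_1\); since \(\alpha<\tfrac13\) the factor \(1-3\alpha\) is strictly positive and the right-hand side is independent of \(N\), whence \(\sum_{n\ge 1}\|v(x_n)-v(x_{n-1})\|^2<+\infty\).

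The only real work is the bookkeeping leading to the master estimate: one must verify that the summation-by-parts rearrangement of \(\sum\alpha_n(a_n-a_{n-1})\) and the shift \(d_{n+1}\mapsto d_n\) conspire so that the surviving coefficient of each \(d_n\) is exactly \((\alpha_{n-1}-1)+2\alpha_n\). It is precisely here that both hypotheses enter — monotonicity of \(\{\alpha_n\}\) to kill \(\sum(\alpha_n-\alpha_{n+1})a_n\), and \(\alpha<\tfrac13\) to make \(3\alpha-1<0\). I note that one cannot simply feed \eqref{estim1} into Lemma~\ref{lem-sum1}: the positive term \(2\alpha_n d_n\) would then have to be placed in the summable slot \(w_k\), which already presupposes \(\sum_n d_n<+\infty\) — exactly conclusion (ii). Hence the self-contained summation above is the natural route, and once the master estimate is in hand the rest is immediate.
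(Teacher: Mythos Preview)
Your proof is correct and follows essentially the same strategy as the paper: sum \eqref{estim1}, exploit the monotonicity of \(\{\alpha_n\}\) together with \(\alpha<\tfrac13\) to obtain a master inequality of the form \(a_{N+1}\le \alpha a_N + C + (3\alpha-1)\sum d_n\), then read off boundedness from the recursion \(a_{N+1}\le\alpha a_N+C\) and summability of \(\sum d_n\) from the uniform bound. The only cosmetic difference is that the paper first rewrites \eqref{estim1} into a fully telescoping form (replacing \(\alpha_n a_{n-1}\) by \(\alpha_{n-1}a_{n-1}\) and grouping \((1-\alpha_n)\delta_{n+1}-(1-\alpha_{n-1})\delta_n\)) before summing, whereas you sum directly and handle the cross terms by Abel summation and an index shift; the two manipulations are equivalent and lead to the same estimate up to harmless differences in the constant \(C\).
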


\begin{proof}
	(i) Return to Lemma \ref{lem5}, by setting $\delta_n=\|v(x_n)-v(x_{n-1})\|^2$, then inequality \eqref{estim1} gives
	\begin{equation}\label{pluie}
	a_{n+1}-a_n-\alpha_n(a_n-a_{n-1})\leq (\alpha_n-1)\delta_{n+1}+2\alpha_n\delta_{n}.
	\end{equation}
	Using the fact that the sequence $\{\alpha_n\}$ is nondecreasing,  one can conclude that inequality \eqref{pluie} implies for $n\geq1$
	\begin{equation*}
	\begin{array}{l}
	a_{n+1}-a_n-\left[\alpha_n a_n- \alpha_{n-1}a_{n-1}\right]+\left[(1-\alpha_n)\delta_{n+1}-(1-\alpha_{n-1})\delta_{n}\right]+\left(1-3\alpha_n\right)\delta_n\leq 0.
	\end{array}
	\end{equation*}
	Let be $N\in \mathbb{N}^*,$ summing up  from $n=1$ to $N$  the above inequality, we get
	\begin{equation}\label{arg}
	\begin{array}{l}
	(a_{N+1}-a_1)- \left[\alpha_N a_N- \alpha_{0}a_{0}\right]+ \left[(1-\alpha_{N})\delta_{N+1}-(1-\alpha_{0})\delta_{1}\right]+\sum_{n=1}^N (1-3\alpha_n)\delta_n
	\leq 0.
	\end{array}
	\end{equation} 
	The fact that $\alpha_n\leq \alpha,$ for $n\geq 0,$  gives 
	\begin{equation}\label{soleil}
	\begin{array}{l}
	(a_{N+1}-a_1)- \left[\alpha a_N- \alpha_{0}a_{0}\right]+ \left[(1-\alpha)\delta_{N+1}-(1-\alpha_{0})\delta_{1}\right]+(1-3\alpha)\sum_{n=1}^N \delta_n
	\leq 0,
	\end{array}
	\end{equation} 
	and then
	\begin{equation}\label{C1}
	(a_{N+1}-\alpha a_N)+(1-\alpha)\delta_{N+1}+(1-3\alpha)\sum_{n=1}^N\delta_n\leq C
	,\end{equation}
	where $C=a_1-\alpha_0 a_0+(1-\alpha_0)\delta_1\in \mathbb{R}.$\\
	Since $\alpha <\frac{1}{3}$ yields $1-3\alpha>0$ and $1-\alpha>0$, then inequality \eqref{C1} implies that  for all $N\geq 1$
	\begin{equation}\label{C2}
	a_{N+1}\leq \alpha a_N+C. \end{equation}
	Recursively for all $
	N\geq  0$ we obtain 
	\begin{align*}
	a_{N+1}&\leq \alpha^{N+1}a_{0}+C(1+\alpha+\alpha^2+...+\alpha^{N})
	\\
	&=\alpha^{N+1}a_{0}+C\frac{1-\alpha^{N+1}}{1-\alpha}.
	\end{align*}
	{Thus the sequence $(a_n)$ is bounded}.\\
	(ii) Then, the sequence $\{v(x_n)\}$ is bounded and since 
	\begin{equation}\label{C4}
	\sup_n\|v(x_{n+1})-v(x_n)\|\leq 2\sup_n\|v(x_n)\| <+\infty,
	\end{equation} 
	the sequence $\{\delta_n\}$ is also bounded.
	Combining \eqref{C4}  with \eqref{C1} and noticing that $1-3\alpha>0,$ yields  
	$$\sum_{n=1}^{+\infty}\delta_n<+\infty,$$
	ensuring the result. 
\end{proof}  

We now proceed to the statement and proof of the main result of this section.

\begin{theorem}\label{t:warped}
{Suppose that Assumption~1 holds,  \( S:=\zer F \neq \varnothing \), and  the sequence}
\( (\alpha_n)_{n\in\mathbb{N}} \) is nondecreasing with \( \alpha_n \in [0,\alpha] \) for some
\( \alpha \in [0,\tfrac{1}{3}) \).
Assume further that
\[
\gamma := \inf_{n\in\mathbb{N}} \gamma_n > 0.
\]
Let \( (x_n)_{n\in\mathbb{N}} \) be the sequence generated by Algorithm~\textbf{(GIPPA)}, and define
\[
y_n := x_n + \alpha_n (x_n - x_{n-1}), \qquad n \ge 1.
\]
Then the following assertions hold:
\begin{enumerate}
\item\label{t:warped_a}
\( \|v(y_n) - v(x_{n+1})\| \to 0 \) as \( n \to \infty \).

\item\label{t:warped_b}
{If \( F \) has a strongly--weakly closed graph,  then every weak cluster point \( x^* \) of \( (x_n) \) is a solution of \eqref{main}}.
In addition, if  \( F^{-1} \) is \( R \)-continuous at \( 0 \), then
\( d(x_n,S) \to 0 \) as \( n \to \infty \).

\item\label{t:warped_weakcvg}
If \( F \) has a strongly--weakly closed graph and {\( v \) is bijective}, 
then \( (x_n)_{n\in\mathbb{N}} \) converges weakly to a solution of \eqref{main}.

\item\label{t:warped_strongcvg}
If Assumption~1\( '\) holds, then \( (x_n)_{n\in\mathbb{N}} \) converges strongly to the unique solution
\( x^* \) of \eqref{main}. Moreover,
\begin{enumerate}
\item\label{t:warped_strongcvg_v}
the sequence \( (v(x_n))_{n\in\mathbb{N}} \) converges linearly to \( v(x^*) \), provided that
\( \alpha \) is sufficiently small;

\item\label{t:warped_strongcvg_x}
if {\( v \) is bijective}, then
\( (x_n)_{n\in\mathbb{N}} \) converges linearly to \( x^* \).
\end{enumerate}
\end{enumerate}
\end{theorem}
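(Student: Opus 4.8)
The plan is to establish the four assertions in order, each time drawing on Lemma~\ref{lem5}, Corollary~\ref{coro-disc} and the items proved before it.

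\emph{Assertion \ref{t:warped_a}.} This is read off directly from the summability already in hand. Corollary~\ref{coro-disc}(ii) gives $\sum_{k\ge1}\delta_k<+\infty$ with $\delta_k:=\|v(x_k)-v(x_{k-1})\|^2$, hence $\|v(x_{n+1})-v(x_n)\|\to0$ and $\|v(x_n)-v(x_{n-1})\|\to0$. Since $v$ is linear, $v(y_n)-v(x_{n+1})=(v(x_n)-v(x_{n+1}))+\alpha_n(v(x_n)-v(x_{n-1}))$, and $\alpha_n\le\alpha<1$ with the triangle inequality give $\|v(y_n)-v(x_{n+1})\|\to0$. I would also record here, for use in \ref{t:warped_weakcvg} and \ref{t:warped_strongcvg}, that $\lim_n a_n$ exists for \emph{every} choice of $x^*\in S$: rewriting \eqref{estim1} as $a_{n+1}\le(1+\alpha_n)a_n-\alpha_n a_{n-1}+(\alpha_n-1)\delta_{n+1}+2\alpha_n\delta_n$ and using $-\alpha_n a_{n-1}\le-\alpha_{n-1}a_{n-1}$ (monotonicity of $\{\alpha_n\}$, $a_{n-1}\ge0$), this is exactly the hypothesis of Lemma~\ref{lem-sum1} with $\Delta_n:=(1-\alpha_n)\delta_{n+1}\ge0$ and $w_n:=2\alpha_n\delta_n$ ($\sum_n w_n<+\infty$), so $\lim_n a_n$ exists.

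\emph{Assertion \ref{t:warped_b}.} From \eqref{mainin} we have $w_{n+1}:=\gamma_n^{-1}(v(y_n)-v(x_{n+1}))\in F(x_{n+1})$, and since $\gamma_n\ge\gamma>0$, assertion \ref{t:warped_a} gives $w_{n+1}\to0$. If $\bar x$ is a weak cluster point of $(x_n)$, choose a subsequence $x_{m_k}\rightharpoonup\bar x$; applying \eqref{mainin} with index $m_k-1$ shows $(x_{m_k},w_{m_k})\in\gra F$ with $x_{m_k}\rightharpoonup\bar x$ and $w_{m_k}\to0$, so the strongly--weakly closed graph of $F$ forces $0\in F(\bar x)$, i.e.\ $\bar x\in S$. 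For the last claim, $x_{n+1}\in F^{-1}(w_{n+1})$ and $F^{-1}(0)=S$, so $R$-continuity of $F^{-1}$ at $0$ provides $\sigma>0$ and a modulus $\rho$ with $F^{-1}(w_{n+1})\subset S+\rho(\|w_{n+1}\|)\mathbb{B}$ whenever $\|w_{n+1}\|\le\sigma$; hence $d(x_{n+1},S)\le\rho(\|w_{n+1}\|)\to0$.

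\emph{Assertion \ref{t:warped_weakcvg}.} The natural move is to apply Opial's Lemma~\ref{l:Opial} \emph{in the range of $v$}, to the sequence $(v(x_n))$ and the set $v(S)$; working with $(x_n)$ directly would fail because $v$ need not be an isometry, so convergence of $\|v(x_n)-v(x^*)\|$ does not transfer to $\|x_n-x^*\|$. Hypothesis~(i) of Lemma~\ref{l:Opial} holds by the observation recorded under \ref{t:warped_a}. For hypothesis~(ii): $(v(x_n))$ is bounded by Corollary~\ref{coro-disc}(i); if $v(x_{n_k})\rightharpoonup z$, then, $v$ being a bijective bounded linear operator, $v^{-1}$ is bounded (bounded inverse theorem), hence weak-to-weak continuous, so $x_{n_k}=v^{-1}(v(x_{n_k}))\rightharpoonup v^{-1}(z)$; by assertion \ref{t:warped_b}, $v^{-1}(z)\in S$, so $z\in v(S)$. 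Opial then yields $v(x_n)\rightharpoonup y_\infty\in v(S)$, and applying $v^{-1}$ once more, $x_n\rightharpoonup v^{-1}(y_\infty)\in S$.

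\emph{Assertion \ref{t:warped_strongcvg}.} First, Assumption~1$'$ implies Assumption~1: $\beta$-strong monotonicity of the pair implies monotonicity, and if $z-v(x)\in\gamma_n F(x)$ and $z-v(y)\in\gamma_n F(y)$ then $-\|v(x)-v(y)\|^2\ge\gamma_n\beta\|x-y\|^2$, forcing $x=y$, so $\gamma_n F+v$ is injective; the same inequality applied to two zeros gives uniqueness of $x^*$. The sharpened estimate: from \eqref{mainin}, $v(y_n)-v(x_{n+1})\in\gamma_n F(x_{n+1})$ and $0\in\gamma_n F(x^*)$, so $\beta$-strong monotonicity yields $\langle v(y_n)-v(x_{n+1}),\,v(x_{n+1})-v(x^*)\rangle\ge\gamma_n\beta\|x_{n+1}-x^*\|^2$, and the polarization identity converts this into
\[
\|v(y_n)-v(x^*)\|^2\ \ge\ a_{n+1}+\|v(y_n)-v(x_{n+1})\|^2+2\gamma_n\beta\|x_{n+1}-x^*\|^2 .
\]
Substituting \eqref{8a} for the left-hand side gives
\[
a_{n+1}+\|v(y_n)-v(x_{n+1})\|^2+2\gamma_n\beta\|x_{n+1}-x^*\|^2\ \le\ (1+\alpha_n)a_n-\alpha_n a_{n-1}+\alpha_n(1+\alpha_n)\delta_n .
\]
Since $\lim_n a_n=:\ell$ exists (by \ref{t:warped_a}), $\alpha_n\uparrow\alpha_\infty$ and $\delta_n\to0$, dropping $\|v(y_n)-v(x_{n+1})\|^2$ and letting $n\to\infty$ yields $2\gamma\beta\limsup_n\|x_{n+1}-x^*\|^2\le(1+\alpha_\infty)\ell-\alpha_\infty\ell-\ell=0$, so $x_n\to x^*$ strongly. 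For \ref{t:warped_strongcvg_v}, discard $\|v(y_n)-v(x_{n+1})\|^2$ in the last inequality and use $\|x_{n+1}-x^*\|^2\ge a_{n+1}/\|v\|^2$ together with $\gamma_n\ge\gamma$ to obtain $\mu a_{n+1}\le\|v(y_n)-v(x^*)\|^2$ with $\mu:=1+2\gamma\beta/\|v\|^2>1$; taking square roots and bounding $\|v(y_n)-v(x^*)\|\le(1+\alpha_n)\sqrt{a_n}+\alpha_n\sqrt{a_{n-1}}$ gives, with $r_n:=\sqrt{a_n}$, the recursion $r_{n+1}\le\tfrac{1+\alpha}{\sqrt\mu}r_n+\tfrac{\alpha}{\sqrt\mu}r_{n-1}$. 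Whenever $\alpha$ is small enough that $(1+2\alpha)/\sqrt\mu<1$ (possible since $\mu>1$ is fixed), a standard two-term linear-recurrence argument --- e.g.\ choosing $\theta>0$ and $\lambda\in(0,1)$ so that $s_n:=r_n+\theta r_{n-1}$ obeys $s_{n+1}\le\lambda s_n$ --- shows $r_n\to0$ $R$-linearly, i.e.\ $v(x_n)\to v(x^*)$ linearly. For \ref{t:warped_strongcvg_x}, if $v$ is bijective then $v^{-1}$ is bounded, so $\|x_n-x^*\|\le\|v^{-1}\|\,\|v(x_n)-v(x^*)\|\to0$ linearly as well.

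The only step that is more than routine assembly is assertion \ref{t:warped_weakcvg}: recognizing that Opial's Lemma has to be applied to $(v(x_n))$ inside $v(\mathcal{H})$ and then transported back through the weak-to-weak continuity of $v^{-1}$. The rest --- the polarization computation and pinning down how small $\alpha$ must be in \ref{t:warped_strongcvg_v} --- is careful but mechanical.
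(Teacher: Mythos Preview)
Your proof is correct and follows the same overall architecture as the paper: Lemma~\ref{lem5} and Corollary~\ref{coro-disc} for the basic estimates, Opial's lemma applied to \((v(x_n))\) rather than \((x_n)\), and a two-term linear recursion for the rate in \ref{t:warped_strongcvg_v}. A few of your steps are genuinely cleaner than the paper's, and it is worth flagging the differences. For \ref{t:warped_a} the paper sums \(\gamma_n^2\|u_n\|^2\) telescopically to show \(\sum_n\gamma_n^2\|u_n\|^2<\infty\); your observation that \(\sum_n\delta_n<\infty\) already forces \(\|v(x_{n+1})-v(x_n)\|\to0\), whence \(\|v(y_n)-v(x_{n+1})\|\to0\) by the triangle inequality, is shorter and avoids that summation entirely. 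For the existence of \(\lim_n a_n\) you invoke Lemma~\ref{lem-sum1} directly, while the paper passes through positive parts and Lemma~\ref{lem-sum}; both are equivalent. In \ref{t:warped_strongcvg} you derive the key contraction from strong monotonicity and the polarization identity, whereas the paper quotes \cite[Proposition~3.3(ii)]{LDT}; your route is self-contained. Finally, in \ref{t:warped_strongcvg_v} the paper sets up its recursion on \(a_n\) after expanding \(\|v(y_n)-v(x^*)\|^2\), while you work with \(r_n=\sqrt{a_n}\) via the triangle inequality; the resulting conditions on \(\alpha\) differ in constants but are qualitatively identical.
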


\medskip 
\vskip 2mm
\begin{proof}

\medskip
\ref{t:warped_a} 
%
{
Define
\[
u_n := \frac{v(y_n)-v(x_{n+1})}{\gamma_n}.
\]
Since \(x_{n+1}=J_{\gamma_n F}^v(y_n)\), we have \(u_n\in F(x_{n+1})\).
Using \eqref{eq:vv*} and \eqref{8a}, we obtain
\begin{align*}
\gamma_n^2\|u_n\|^2
&= \|v(y_n)-v(x_{n+1})\|^2 \\
&\le \|v(y_n)-v(x^*)\|^2 - a_{n+1} \\
&= (a_n-a_{n+1}) + \alpha_n(a_n-a_{n-1})
   + \alpha_n(1+\alpha_n)\delta_n.
\end{align*}
Since \((\alpha_n)\) is nondecreasing and satisfies \(\alpha_n<1\), it follows that
\[
\gamma_n^2\|u_n\|^2
\le (a_n-a_{n+1}) + \alpha_n a_n - \alpha_{n-1} a_{n-1}
   + 2\delta_n.
\]
Summing from \(n=1\) to \(N\) and using \(\alpha_n\le \alpha\), we deduce
\[
\sum_{n=1}^{N}\gamma_n^2\|u_n\|^2
\le (a_1-a_{N+1}) + (\alpha a_N-\alpha_0 a_0)
   + 2\sum_{n=1}^{N}\delta_n.
\]
Letting \(N\to\infty\), and noting that \((a_n)\) is bounded and
\(\sum_{n\ge1}\delta_n<\infty\) (see Corollary~\ref{coro-disc}), we conclude that
\[
\sum_{n=0}^{\infty}\gamma_n^2\|u_n\|^2 < \infty.
\]
Consequently, since \(\gamma^2\|u_n\|^2 \le \gamma_n^2\|u_n\|^2\) and
\(\gamma_n^2\|u_n\|^2 \to 0\), it follows that \(\|u_n\|\to 0\) and hence
\[
\|v(y_n)-v(x_{n+1})\|\to 0.
\]
}
\medskip


\ref{t:warped_b}
{
Note that \(u_n\in F(x_{n+1})\).
Let \(x^*\) be a weak cluster point of the sequence \((x_n)\).
Since \(u_n\to 0\) and the graph of \(F\) is strongly--weakly closed, we obtain
\(0\in F(x^*)\), and hence \(x^*\in S\).
}
{
Moreover, if \(F^{-1}\) is \(R\)-continuous at \(0\), then for \(n\) sufficiently large,
\[
x_{n+1}\in F^{-1}(u_n)
\subset F^{-1}(0)+\rho(\|u_n\|)\mathbb{B}
= S+\rho(\|u_n\|)\mathbb{B},
\]
where \(\rho\) denotes the associated modulus of continuity.
Consequently,
\[
d(x_{n+1},S)\le \rho(\|u_n\|)\to 0,
\]
which completes the proof.
}

\ref{t:warped_weakcvg} 
{
From inequality~\eqref{estim1}, we obtain
\begin{align*}
a_{n+1}-a_n 
&\leq \alpha_n (a_n-a_{n-1})
+(\alpha_n-1)\|v(x_{n+1})-v(x_n)\|^2
+2\alpha_n\|v(x_n)-v(x_{n-1})\|^2.
\end{align*}
Taking positive parts and using the fact that \(\alpha_n\le \alpha<1\), it follows that
\[
[a_{n+1}-a_n]_+
\le \alpha [a_n-a_{n-1}]_+
+2\alpha\|v(x_n)-v(x_{n-1})\|^2.
\]
By Corollary~\ref{coro-disc}, we have
\[
\sum_{n=1}^{\infty}\|v(x_n)-v(x_{n-1})\|^2 < \infty.
\]
Applying Lemma~\ref{lem-sum} with
\[
b_n := [a_n-a_{n-1}]_+, 
\qquad 
w_n := 2\alpha\|v(x_n)-v(x_{n-1})\|^2,
\]
yields
\[
\sum_{n=1}^{\infty}[a_n-a_{n-1}]_+ < \infty.
\]
Since the sequence \((a_n)\) is nonnegative, an argument similar to that used in the proof of Lemma~\ref{lem-sum1} shows that \((a_n)\) converges. Consequently, the limit
\[
\lim_{n\to\infty}\|v(x_n)-v(x^*)\|
\]
exists. Therefore, condition~(i) of Lemma~\ref{l:Opial} is satisfied.
}
\medskip

%
%
\medskip

We now show that every weak cluster point of the sequence
\((v(x_n))_{n\in\mathbb{N}}\) belongs to \(v(\zer F)\).
Let \(\bar{w}\) be a weak cluster point of \((v(x_n))_{n\in\mathbb{N}}\).
Then there exists a subsequence \((v(x_{k_n}))_{n\in\mathbb{N}}\) such that
\(v(x_{k_n}) \rightharpoonup \bar{w}\).
Since \(v\) is linear and bijective, it follows that
\[
x_{k_n} \rightharpoonup \bar{x} := v^{-1}(\bar{w}).
\]
Moreover, since \(u_{k_n-1} \in F(x_{k_n})\) and the graph of \(F\) is
strong--weakly closed, we obtain \(0 \in F(\bar{x})\).
Consequently,
\[
\bar{w} = v(\bar{x}) \in v(\zer F).
\]

By Lemma~\ref{l:Opial}, the sequence \((v(x_n))_{n\in\mathbb{N}}\) converges
weakly to some \(\tilde{w} \in v(\zer F)\).
Therefore, \((x_n)\) converges weakly to
\(\tilde{x} := v^{-1}(\tilde{w}) \in \zer F\), since \(v^{-1}\) is weakly
continuous.

\medskip
\ref{t:warped_strongcvg}: 
{Since the pair \((F,v)\) is strongly monotone, relation~\eqref{strong}
implies that \eqref{main} admits a unique solution \(x^*\).
Furthermore, the pair \((\gamma_n F, v)\) is also strongly monotone, and
the operator \(\gamma_n F + v\) is injective for all \(n\).
Hence, Assumption~\(1'\) is stronger than Assumption~1.}

{Using assertions~\ref{t:warped_a} and~\ref{t:warped_weakcvg}, we deduce that the
sequence \((\|v(x_n)-v(x^*)\|)\) is convergent and that
\(\|v(y_n)-v(x_{n+1})\|\to 0\).
By \cite[Proposition~3.1(ii)]{LDT}, we have \(x^* \in \Fix J_{\gamma_n F}^v\),
and hence \((x^*,x^*) \in \gra J_{\gamma_n F}^v\).
Since \((y_n,x_{n+1}) \in \gra J_{\gamma_n F}^v\),
\cite[Proposition~3.3(ii)]{LDT} yields
$$
\beta \gamma \|x_{n+1}-x^*\|^2
+ \|v(x_{n+1})-v(x^*)\|^2
\le
\|v(y_n)-v(x^*)\|\,\|v(x_{n+1})-v(x^*)\|,
$$
where \(\gamma := \inf_{n\in\mathbb{N}} \gamma_n\).
}
{
Therefore, we obtain
\begin{align*}
\beta\gamma \|x_{n+1}-x^*\|^2
\le \|v(x_{n+1})-v(x^*)\|\,\|v(y_n)-v(x_{n+1})\|
\;\xrightarrow[n\to\infty]{}\;0.
\end{align*}
Consequently, the sequence \((x_n)\) converges strongly to \(x^*\).
}
\medskip

\noindent\ref{t:warped_strongcvg_v}.  
{
Since \(v\) is linear, it is \(L\)-Lipschitz continuous for some \(L>0\).
By \cite[Proposition~3.3(ii)]{LDT}, we have
\[
\|v(x_{n+1})-v(x^*)\|
\le \frac{1}{1+\beta\gamma_n L^{-2}}\,\|v(y_n)-v(x^*)\|
\le \kappa\,\|v(y_n)-v(x^*)\|,
\]
where
\[
\kappa := \frac{1}{1+\beta\gamma L^{-2}} < 1,
\qquad
\gamma := \inf_{n\in\mathbb{N}}\gamma_n.
\]
}
{
Using \eqref{8a} and the fact that \(\alpha_n \le 1\), we obtain
\begin{align*}
\|v(y_n)-v(x^*)\|^2
&\le \|v(x_n)-v(x^*)\|^2
+ \alpha_n\bigl(\|v(x_n)-v(x^*)\|^2-\|v(x_{n-1})-v(x^*)\|^2\bigr) \\
&\quad + \alpha_n(1+\alpha_n)\|v(x_n)-v(x_{n-1})\|^2 \\
&\le (1+5\alpha_n)\|v(x_n)-v(x^*)\|^2
+ 3\alpha_n\|v(x_{n-1})-v(x^*)\|^2.
\end{align*}
It follows that
\[
\|v(x_{n+1})-v(x^*)\|^2
\le \kappa^2(1+5\alpha)\|v(x_n)-v(x^*)\|^2
+ 3\kappa^2\alpha\|v(x_{n-1})-v(x^*)\|^2.
\]
}
{
Consequently,
\begin{align*}
\|v(x_{n+1})-v(x^*)\|^2
+ \tilde{\alpha}\|v(x_n)-v(x^*)\|^2
\le \rho^2\bigl(\|v(x_n)-v(x^*)\|^2
+ \tilde{\alpha}\|v(x_{n-1})-v(x^*)\|^2\bigr),
\end{align*}
where the parameters \(\alpha,\tilde{\alpha}>0\) are chosen such that
\[
\rho := \sqrt{\kappa^2(1+5\alpha)+\tilde{\alpha}} < 1,
\qquad
\frac{3\kappa^2\alpha}{\kappa^2(1+5\alpha)+\tilde{\alpha}} \le \tilde{\alpha}.
\]
For instance, one may take \(\tilde{\alpha}=(1-\kappa^2)/2\) and choose
\(\alpha>0\) sufficiently small.
}

{By induction, we obtain
\begin{align*}
\|v(x_{n+1})-v(x^*)\|^2
+ \tilde{\alpha}\|v(x_n)-v(x^*)\|^2
\le \rho^{2n}\bigl(\|v(x_1)-v(x^*)\|^2
+ \tilde{\alpha}\|v(x_0)-v(x^*)\|^2\bigr),
\end{align*}
which implies
\[
\|v(x_{n+1})-v(x^*)\|
\le \rho^n
\sqrt{\|v(x_1)-v(x^*)\|^2
+ \tilde{\alpha}\|v(x_0)-v(x^*)\|^2}.
\]
This establishes the claimed linear convergence and completes the proof.
}

\medskip
\ref{t:warped_strongcvg_x}
If $v$ is bijective and linear then $v^{-1}$ is  $\ell$-Lipschitz continuous for some $\ell>0$. Then
\begin{align*}
\|x_n - x^*\| 
&\leq \ell \|v(x_{n+1}) - v(x^*)\| \\
&\leq \ell \rho^n \sqrt{\|v(x_1)-v(x^*)\|^2+\tilde{\alpha}\|v(x_{0})-v(x^*)\|^2}.
\end{align*}
Thus, $(x_n)$ converges linearly to $x^*$, which completes the proof.  

\end{proof}

\begin{remark}
{The properties stated in \ref{t:warped_a} and \ref{t:warped_b} are reminiscent of those of the \(\mathbf{(DCA)}\) (Difference of Convex Algorithm) in nonconvex programming; see \cite{Pham}. While DCA is fundamentally based on a \emph{difference} of convex functions, the algorithms proposed here rely instead on a \emph{product} structure, namely the monotonicity of pairs of operators. }

{Moreover, when \(\alpha_n=0\), that is, when the \(\mathbf{(GIPPA)}\) reduces to the \(\mathbf{(GPPA)}\), these properties continue to hold without requiring the mapping \(v\) to be linear.
}
\end{remark}

\section{Numerical Examples}

In this section, we present simple illustrative examples, implemented in Scilab~5.5.2, that highlight the theoretical results of Theorem~\ref{t:warped} and serve as a benchmark for numerical experiments. These experiments compare the performance of the inertial scheme $(\mathbf{GIPPA})$ with the classical generalized proximal point algorithm $(\mathbf{GPPA})$ introduced in \cite{Arag,LDT}.

\begin{example}
	
	Consider the operator \(F:\mathbb{R}^3 \to \mathbb{R}^3\) defined by
	\[
	F(x) = A x - b,
	\]
	where
	$$
	A =\begin{bmatrix} 1 & 2 & 3\\ 4 & 5 & 6 \\ 7 & 8 & 9\end{bmatrix},  \quad
	b = \begin{bmatrix} 14 \\ 32 \\ 50\end{bmatrix}.
	$$
	Then $A$ is non-monotone and non-invertible. The solution set is non-empty since it has a solution $x^*=(1,2,3)$.
	We choose for example
	$$
	v=v_1=\begin{bmatrix} 2 & 2 & 3\\ 4 & 5 & 6 \\ 7 & 8 & 9\end{bmatrix},
	$$
	then $v_1$ is invertible and $(F,v_1)$ is monotone.

	Starting from $x_0=(-0.5,-0.5,-0.5)$ and $x_1=(0.7,0.7,0.7)$, we evaluate the iterate error $\|x_n-x^*\|_2$ to compare the performance of $(\mathbf{GPPA})$ and $(\mathbf{GIPPA})$. 
	
	
	\begin{figure}[h]
		\subfloat[For $\gamma_n=0.1 + \frac{0.3}{n+10}$ and various $\alpha_n$]{\includegraphics[scale=0.4]{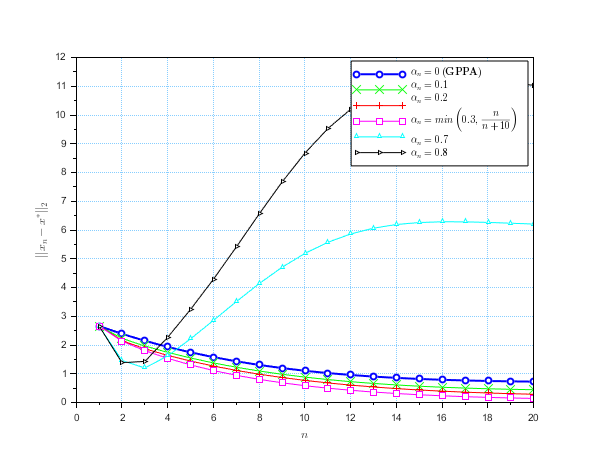}}
		\subfloat[For $\alpha_n=\min\{0.3, \frac{n}{n+10}\}$ and various $\gamma_n$]{\includegraphics[scale=0.4]{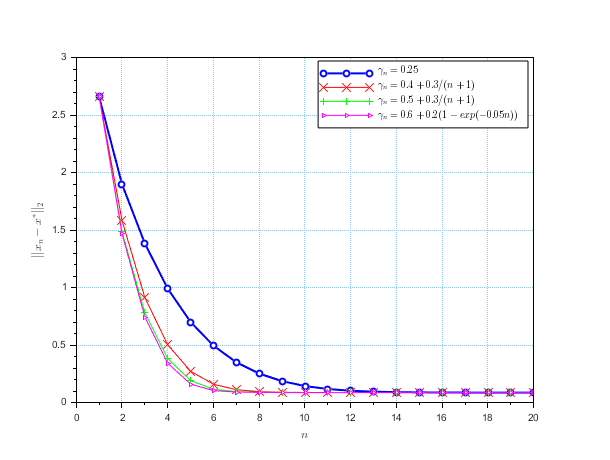}}
		\caption{The rate of convergence of $\|x_n-x^*\|_2$ {when $v=v_1$}.}
		\label{error0}
	\end{figure} 
	
	Figure \ref{error0}(a) illustrates the convergence rate of $\|x_n-x^*\|_2$ for $\gamma_n=0.1 + \frac{0.3}{n+10}$ and different values of the inertial parameter $\alpha_n$.  
	The results show that $\|x_n-x^*\|_2$ decreases rapidly when $\alpha_n \in [0,\tfrac{1}{3})$, where $(\mathbf{GIPPA})$ significantly outperforms $(\mathbf{GPPA}) (\alpha_n=0)$.  
	However, when $\alpha_n$ exceeds $\tfrac{1}{3}$, the convergence deteriorates, confirming the theoretical restriction $\alpha_n<\tfrac{1}{3}$. Meanwhile, Figure \ref{error0}(b) presents the convergence behavior of $\|x_n - x^*\|_2$ for various choices of $\gamma_n$, where $\alpha_n = \min\{0.3, \frac{n}{n+10}\}$. It is observed that the convergence improves as the lower bound $\gamma:=\inf_{n\in \N} \gamma_n$ increases.\\
	

{
Next, we choose
\[
v = v_2 =
\begin{bmatrix}
1 & 0 & 0\\
0 & 0 & 0\\
0 & 0 & 0
\end{bmatrix}.
\]
Then the pair \((F,v_2)\) is monotone, and the corresponding warped resolvent \(J_A^{v_2}\) is well-defined. 
We note that this choice of \(v_2\) is computationally less expensive than the choice of \(v_1\).
In this case, we obtain the iterative scheme illustrated in Figure~2.
}

\begin{figure}[h]
	\centering {\includegraphics[scale=0.4]{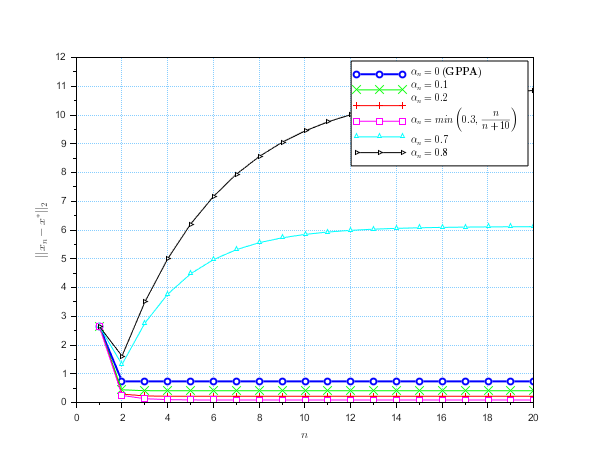}}
	\caption{\centering {The iterate error $\|x_n-x^*\|_2$  when $v=v_2$}}
\end{figure}	
\end{example}

\begin{example}
	Consider the operator \({f}:\mathbb{R}^3 \to \mathbb{R}^3\) defined by
	\[
	{f}(x) = {A} x +{g}(x),
	\]
	where
	
	$$
	{A} =
	\begin{bmatrix}-1 &0&0\\0 & 5 & 0 \\ 0 & 0 & 9\end{bmatrix},  \\
	\quad	{g}(x) = 
	\begin{bmatrix}2\sin(\vert x_3\vert+x_2)\\\cos (\vert x_1\vert-x_2)\\ 2\cos  x_2 -3\sin\vert x_3\vert\end{bmatrix}.
	$$
	Then $A$ and $g$ are non-monotone but $(f,A)$ is monotone.  In this case, one can choose the kernel $v=A$. 
	
	Starting from initial values $x_0=(2,-2,1)$ and $x_1=(1.5,-1.5,0.5)$, the numerical results in Figure \ref{error} illustrate the convergence behavior of $\|x_n-x^*\|_2$ for an approximate solution $x^*\approx(-0.06,-0.195,-0.164)$.
	The plot in Figure \ref{error}(a) illustrate the rate of convergence of $\|x_n-x^*\|_2$ for various choices of $\alpha_n$ and fixed $\gamma_n=0.5$, while Figure \ref{error}(b) displays $\|x_n-x^*\|_2$ for $\alpha_n=\min\{0.3, \frac{n}{n+10}\}$ and different choices of $\gamma_n$. 
	
	\begin{figure}[h]
		\subfloat[For $\gamma_n=0.5$ and various $\alpha_n$]{ \includegraphics[scale=0.4]{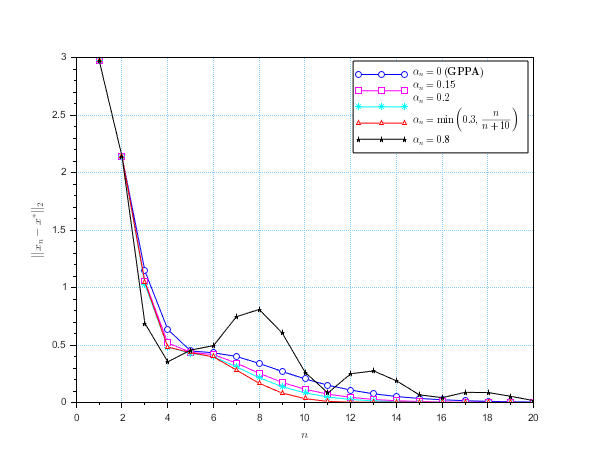}}
		\subfloat[For $\alpha_n=\min\{0.3, \frac{n}{n+10}\}$ and various $\gamma_n$]{\includegraphics[scale=0.4]{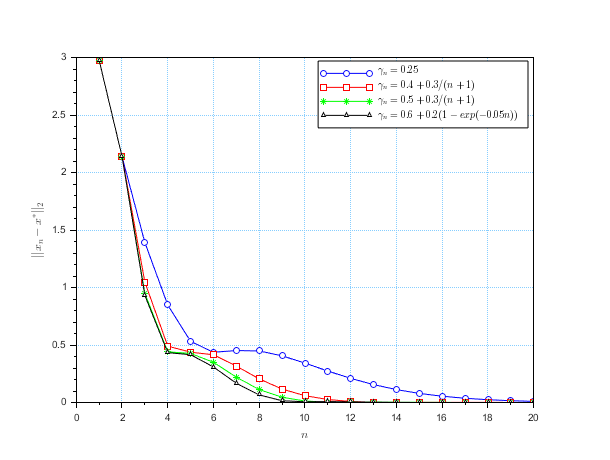}}
		\caption{The rate of convergence of $\|x_n-x^*\|_2$.}
		\label{error}
	\end{figure} 


	As in the previous example, Figure \ref{error}(a) clearly shows that  $(\mathbf{GIPPA})$ outperforms $(\mathbf{GPPA})$, while Figure \ref{error}(b) indicates that the convergence improves with larger values of $\gamma:=\inf_{n\in \N} \gamma_n$.
	
\end{example}

\section{Conclusion and Perspectives}

In this paper, we have addressed the question of how to construct the associated mappings so that the resulting pairs are monotone and proposed an inertial proximal-type algorithm based on warped resolvents for solving non-monotone inclusions, leveraging the monotonicity of pairs of operators.  Under mild assumptions, we have established weak, strong, and linear convergence of the method, without imposing restrictive conditions on the trajectories. Numerical examples further demonstrate the algorithm's effectiveness in comparison with the method in \cite{LDT}.

Looking ahead, we believe that the framework of monotone operator pairs opens up a promising and largely unexplored research direction. In particular, developing algorithms for problems involving the sum of two operators--where one or both may fail to be monotone--could significantly broaden the scope of resolvent-based techniques. This includes designing new splitting schemes and operator-splitting strategies that strategically exploit pair monotonicity to secure robust convergence guarantees. Continued exploration of these ideas has the potential to reshape algorithmic design for non-monotone problems, extend the reach of monotone operator theory, and stimulate further advances in large-scale optimization and variational analysis.

%



\begin{thebibliography}{10}


\bibitem{acl}
S.~Adly, M.~G. Cojocaru, and B.~K. Le.
\newblock State-dependent sweeping processes: asymptotic behavior and algorithmic approaches.
\newblock {\em J. Optim. Theory Appl.}, 202(2):932--948, 2024.
\newblock \url{https://doi.org/10.1007/s10957-024-02485-4}.

\bibitem{AA} 
F.~Alvarez and H.~Attouch. 
\newblock An inertial proximal method for maximal monotone operators via discretization of a nonlinear oscillator with damping. 
\newblock {\em Set-Valued Anal.}, 9(1--2):3--11, 2001.
\newblock \url{https://doi.org/10.1023/A:1011253113155}.

\bibitem{Arag}
F.~J. Arag\'on~Artacho, R. M.~T. Fleming, and P.~T. Vuong.
\newblock Accelerating the {DC} algorithm for smooth functions.
\newblock {\em Math. Program.}, 169(1):95--118, 2018.
\newblock \url{https://doi.org/10.1007/s10107-017-1180-1}.

{\bibitem{balhag2022weak} A. Balhag, Z. Mazgouri and M. Théra.
\newblock Convergence of inertial prox-penalization and inertial forward-backward algorithms for solving monotone bilevel equilibrium problems. 
\newblock {\em Optimization}, Vol. 74 (12): 2885--2929, 2025.
\newblock \url{https://doi.org/10.1080/02331934.2024.2341934}.}

{\bibitem{balhag2022weak1} A. Balhag, Z. Mazgouri, H. Riahi and M. Théra.
\newblock An inertial proximal splitting algorithm for
hierarchical bilevel equilibria in Hilbert spaces.
\newblock {\em Optimization},  1--29, 2025.
\newblock \url{https://doi.org/
10.1080/02331934.2025.2606336}.}

\bibitem{bbc}
H. H. Bauschke, J. M. Borwein, and P. L. Combettes, 
\newblock Bregman monotone optimization algorithms,
\newblock {\em SIAM J. Control Optim.}, 42 (2003) 596--636.
\newblock \url{https://doi.org/10.1137/S0363012902407120}.

\bibitem{L1}
B.~K. Le.
\newblock {R-continuity with applications to convergence analysis of Tikhonov regularization and DC programming},
\newblock {\em Journal of Convex Analysis}, {31} (2024), 243--254.
\newblock \url{https://www.heldermann.de/JCA/JCA31/JCA311/jca31014.htm}.

\bibitem{LDT}
B.~K. Le, M.~N. Dao, and M. Th\'era. 
\newblock Solving non-monotone inclusions using monotonicity of pairs of operators. 
\newblock {\em J. Optim. Theory Appl.}, 207:18, 2025. 
\newblock \url{https://doi.org/10.1007/s10957-025-02770-w}.


\bibitem{bc}
M.~N. B\`ui  and P.~L. Combettes.
\newblock Warped proximal iterations for monotone inclusions.
\newblock {\em J. Math. Anal. Appl.}, 491(1):124315, 21 pp., 2020.
\newblock \url{https://doi.org/10.1016/j.jmaa.2020.124315}.

\bibitem{LT}
B.~K. Le and M. Th\'era.
\newblock Explicit convergence rate of the proximal point algorithm under R-continuity.
\newblock {\em Evol. Equ. Control Theory}, Vol 17, 93--105, 2026 
\newblock \url{https://doi.org/10.3934/eect.2025016}.

\bibitem{Opial}
Z.~Opial.
\newblock Weak convergence of the sequence of successive approximations for nonexpansive mappings.
\newblock {\em Bull. Amer. Math. Soc.}, 73:591--597, 1967.
\newblock \url{https://doi.org/10.1090/S0002-9904-1967-11761-0}.


\bibitem{Rockafellar1970}
R.~T. Rockafellar.
\newblock Convex analysis.
\newblock {\em Princeton Univ. Press}, Vol. 28 of Princeton Math. Series, 1970.

\bibitem{Rockafellar}
R.~T. Rockafellar.
\newblock Monotone operators and the proximal point algorithm.
\newblock {\em SIAM J. Control Optim.}, 14(5):877--898, 1976.
\newblock \url{https://doi.org/10.1137/0314056}.

\bibitem{Rockafellar2019}
R.~T. Rockafellar.
\newblock Progressive decoupling of linkages in optimization and variational
  inequalities with elicitable convexity or monotonicity.
\newblock {\em Set-Valued Var. Anal.}, 27(4):863--893, 2019.
\newblock \url{https://doi.org/10.1007/s11228-018-0496-1}.

\bibitem{Rockafellar2023}
R.~T. Rockafellar.
\newblock Augmented Lagrangians and hidden convexity in sufficient conditions for local optimality.
\newblock {\em Mathematical Programming.}, 198 (2023), 159-194.
\newblock \url{https://doi.org/10.1007/s10107-022-01768-w}

\bibitem{Pham}
P.~D. Tao and L.~T.~H. An.
\newblock Convex analysis approach to d.c. programming: theory, algorithms and
  applications.
\newblock {\em Acta Math. Vietnam.}, 22(1):289--355, 1997.
\newblock \url{https://math.ac.vn/uploads/files/9701289.pdf}.
\end{thebibliography}
\end{document}